\documentclass[11pt]{amsart}

\usepackage[marginratio=1:1,height=660pt,width=480pt,tmargin=70pt]{geometry}

\usepackage{amssymb,latexsym,amsmath,amsthm,amscd}
\usepackage{mathrsfs}   
\usepackage{fancyhdr}
 \usepackage{ifthen} 
 \usepackage{xstring}

\pagestyle{fancy} 
\fancyhf{}

\usepackage{etoolbox}

\newcommand{\im}{\lrcorner\,}
%----------------------------------------------

\appto\appendix{\addtocontents{toc}{\protect\setcounter{tocdepth}{1}}}
\appto\listoffigures{\addtocontents{lof}{\protect\setcounter{tocdepth}{1}}}
\appto\listoftables{\addtocontents{lot}{\protect\setcounter{tocdepth}{1}}}
\theoremstyle{plain}
\newtheorem{theorem}{Theorem}[section]

\newtheorem{proposition}[theorem]{Proposition}
\newtheorem{definition}[theorem]{Definition}

\theoremstyle{remark}

\newtheorem{remark}[theorem]{Remark}

\newtheorem*{example*}{Example}

\numberwithin{equation}{section}

\usepackage{enumitem,array,rotating}  
\usepackage{color}
\usepackage{xcolor}
\definecolor{carmine}{rgb}{0.59, 0.0, 0.09}
\definecolor{mediumpersianblue}{rgb}{0.0, 0.4, 0.65}
\definecolor{persianplum}{rgb}{0.44, 0.11, 0.11}
\usepackage[colorlinks=true,
            linkcolor=persianplum, 
            urlcolor=olive,
            citecolor=mediumpersianblue, 
            backref=page]{hyperref}
\usepackage{amsmath}
\usepackage{amsfonts}
\usepackage{amssymb}
 
\urlstyle{same}

%-----------mathserif---------

%---------mathcal---------------

\newcommand{\cE}{\mathcal{E}}
\newcommand{\cG}{\mathcal{G}}

\newcommand{\cP}{\mathcal{P}}

\newcommand{\cI}{\mathcal{I}}

%-----------------boldsymbol

%---------mathscr---------
\newcommand{\scX}{\mathscr{X}}
\newcommand{\scV}{\mathscr{V}}
\newcommand{\scD}{\mathscr{D}}

\newcommand{\scC}{\mathscr{C}}
\newcommand{\scH}{\mathscr{H}}

%--------mathbb------------

%-------------mathfrak---------

\newcommand{\fg}{\mathfrak{g}}
\newcommand{\fp}{\mathfrak{p}}

\newcommand{\fs}{\mathfrak{s}}
%-------------mathrm--------- 
\newcommand{\ri}{\mathrm{i}}

\newcommand{\rD}{\mathrm{D}}

% --------mathbb------------
\newcommand{\R}{\mathbb{R}}
\newcommand{\RR}{\mathbb{R}}
\newcommand{\KK}{\mathbb{K}}

\newcommand{\CC}{\mathbb{C}}

\newcommand{\PP}{\mathbb{P}}
\newcommand{\QQ}{\mathbb{Q}}
\newcommand{\HH}{\mathbb{H}}
\newcommand{\SSS}{\mathbb{S}}

%---------mathbffont---------

\newcommand{\bC}{\mathbf{C}}

\newcommand{\bH}{\mathbf{H}}

\newcommand{\bP}{\mathbf{P}}
\newcommand{\bQ}{\mathbf{Q}}
\newcommand{\bR}{\mathbf{R}}

\newcommand{\bT}{\mathbf{T}}

\newcommand{\w}{{\,{\wedge}\;}}

\newcommand{\exd}{\mathrm{d}}

\newcommand{\ve}{\varepsilon}

\newcommand{\spn}{\operatorname{span}}

\newcommand{\half}{\textstyle{\frac 12}}
\newcommand{\Dt}{\textstyle{\frac{\mathrm D}{\exd t}}}

%-----------------  Section -------------

\makeatletter
\renewcommand*{\p@section}{\S\,}
\renewcommand*{\p@subsection}{\S\,}
\renewcommand*{\p@subsubsection}{\S\,}
 
\makeatother
%------------------------
\usepackage{float}

%*************
%\usepackage[small,nohug]{diagrams}        

%\diagramstyle[labelstyle=\scriptstyle]

\fancyhead[CE]{ Kry\'nski and  Makhmali}
\fancyhead[CO]{A characterization of chains in dimension three}
\fancyhead[RO,RE]{\thepage }

 \begin{document}

\author{Wojciech Kry\'nski}\author{Omid Makhmali}

 \address{\newline Wojciech Kry\'nski\\\newline
   Institute of Mathematics, Polish Academy of Sciences, ul.~\'Sniadeckich 8, 00-656 Warszawa, Poland\\\newline
   \textit{Email address: }{\href{mailto:krynski@impan.pl}{\texttt{krynski@impan.pl}}}\\\newline\newline
   Omid Makhmali\\\newline
   Departamento de Geometr\'{\i}a y Topolog\'{\i}a and IMAG, Universidad de Granada, Granada 18071, Spain \\\newline
   \textit{Email address: }{\href{mailto:omakhmali@ugr.es}{\texttt{omakhmali@ugr.es}}}\\\newline
Department of Mathematics and Statistics, UiT The Arctic University of Norway, Troms\o\  90-37,Norway \\\newline
    \textit{Email address: }{\href{mailto:omid.makhmali@uit.no}{\texttt{omid.makhmali@uit.no}}}
 }

\title[]
{A characterization of chains in dimension three} 
\date{\today}

\begin{abstract}
Given a 3-dimensional (para-)CR  structure, its family of  chains define a 3-dimensional path geometry. We provide necessary and sufficient conditions that determine whether a path geometry in dimension three arises from chains of a CR or para-CR 3-manifold.  We demonstrate how our characterization can be verified computationally for a given 3-dimensional path geometry and discuss a few examples.
\end{abstract}

\subjclass{Primary: 53A40, 53B15,  53C15; Secondary: 53A20, 53A55, 34A26, 34A55, 32V99}
\keywords{path geometry, CR geometry, chains, Cartan connection, Cartan reduction, second order ODEs}

\maketitle
  
\vspace{-.5 cm}

\setcounter{tocdepth}{2} 
\tableofcontents

\section{Introduction}
\label{sec:introduction}

Geometric structures on manifolds can give rise to distinguished set of curves  whose behaviour are of interest in a variety of problems. For example,  geodesics in (pseudo-)Riemannian geometry or in projective structures,  conformal geodesics in conformal geometry, null-geodesics in pseudo-Riemannian conformal geometry, and chains in CR structures or, more generally, in  contact parabolic geometries are some of the most well-known cases of such distinguished curves.

The main topic of interest in this article is  distinguished curves in 3-dimensional CR and para-CR structures. Such distinguished curves define 3-dimensional \emph{(generalized) path geometries}.  Recall that a (generalized) path geometry is locally defined  in terms of a set of paths on a manifold with the property that along each direction in (an open subset of) the tangent space of each point of the manifold, there passes a unique path in that family that is tangent to that direction. For instance, geodesics of an affine connection on a manifold define  a path geometry. Path geometries are a generalization of projective structures in which the curves may not be geodesics of any affine connection or satisfy any variational property. More generally, they may not be defined along every direction in each tangent space but may only be defined for an open set of directions at each point.

The  class of distinguished curves that we study are referred to as \emph{chains} and was originally defined in \cite{CM-CR} for CR structures.  On any CR manifold  chains are a canonical family of unparametrized curves  with the property that through each point and along each direction transverse to the contact distribution at that point, there passes a unique chain. As a result, chains define a canonical  generalized path geometry on any CR manifold.  Furthermore, for  para-CR structures,   defined as  contact manifold whose contact distribution has a splitting into two integrable Lagrangian distributions, it can be shown, in a similar manner, that chains can be defined  as well. As a result, any $(2n+1)$-dimensional  CR and para-CR geometry defines a canonical $(2n+1)$-dimensional path geometry via its chains.

The  objective of this paper is to give a set of invariant conditions for 3-dimensional path geometries which are satisfied if and only if the  3-dimensional path  geometry arises from chains of a CR or para-CR geometry. Our invariant conditions are computationally verifiable, i.e. if a 3-dimensional path geometry is presented either in terms of abstract structure equations or as a pair of second order ODEs, these conditions can be computed by manipulations that involve  roots of a  binary quartic and differentiation. The steps needed to verify the conditions are explained in detail within the proofs.

\subsection{Outline of the article and main results}
\label{sec:outline-article-main}

In  \ref{sec:path-geom-review} we give a review of CR, para-CR, and path geometry in dimension 3. The definition of a (generalized) path geometry on  an $n$-dimensional manifold $M$ is given in terms of a foliation of (an open set of) the $(2n-1)$-dimensional projectivized tangent bundle $\nu\colon\PP TM\to M$ by curves that intersect the fibers of $\PP TM\to M$ trivially. 
We recall some basic facts about path geometries in any dimension including their local realizability in terms of point equivalence classes of (systems of) 2nd order ODEs and a solution of their equivalence problem in the form of a Cartan connection. An important feature of 3-dimensional path geometries for us is that their fundamental invariants can be represented as a binary quadric $\bT,$ referred to as the torsion, and a binary quartic $\bC,$ referred to as the curvature.  As will be discussed in \ref{sec:path-geom-defin}, a generalized 3-dimensional path geometry on a 3-manifold $N$ is denoted by a triple $(Q,\scX,\scV)$ where $Q$ is 5-dimensional, $\scX,\scV\subset TQ$ intersect trivially and have rank 1 and 2, respectively,  their Lie bracket spans $T Q$ everywhere,  $\scV$ is integrable, and the local 3-dimensional leaf space of the foliation  induced by $\scV$ on $Q$ is an open set of $N.$ We define CR and para-CR structures on a contact manifold in terms of an integrable almost (para-)complex structure on the contact distribution that is compatible with the Levi bracket. In 3-dimensional contact manifolds the integrability of such a structure is automatic.  We recall the solution of the equivalence problem of (para-)CR structures. In Remark \ref{rmk:para-cr-2D-path} we will see that 3-dimensional para-CR structures are equivalent to (generalized) 2-dimensional path geometries.

In \ref{sec:an-overview-chains}, using the Cartan geometries associated to  (para-)CR structures, we give a Cartan geometric description of their chains, following \cite{CZ-CR}. We give a set of necessary conditions for a (generalized) 3-dimensional path geometry on $N,$ i.e. a triple $(Q,\scX,\scV),$ to arise from  the chains of a (para-)CR structure. In \ref{sec:corr-pair-2nd} we describe the 3-dimensional path geometry of chains in these two geometries in terms of the point equivalence class of pairs of second order ODEs. In particular, we discuss the chains of the flat CR structure on the 3-sphere  $\SSS^3\subset \CC^2$ and the flat para-CR geometry on the projectivized tangent bundle of $\PP^2,$ denoted as $\PP T\PP^2, $ and the induced (para-)K\"ahler-Einstein metrics on their respective space of chains, i.e. $\mathrm{SU}(2,1)\slash\mathrm{U}(1,1)$ and $\mathrm{SL}(3,\RR)\slash\mathrm{GL}(2,\RR)$.

In \ref{sec:path-geom-aris-1} and \ref{sec:3d-path-geometries} we prove our invariant characterization of chains. 
\begin{theorem}\label{thm:3D-path-CR-para-CR}
    A 3-dimensional path geometry $(Q,\scX,\scV)$ arises from chains of a 3-dimensional (para-)CR structure $(N,J,\scC)$ if and only if 
  \begin{enumerate}
  \item The binary quartic $\bC$ has two distinct  roots of multiplicity 2. 
  \item A canonical  2-form  $\rho\in\Omega^2(Q)$ of rank 2, defined via normalizing $\bC,$  is closed. 
  \item  The entries of the  binary quadric   $\bT$,  pulled-back by the bundle inclusion $\iota\colon\cG_{D}\hookrightarrow\cG$ defined via normalizing $\bC,$ have no dependency on the fibers of $Q\to N$ where  $N:=Q\slash\scV.$
  \end{enumerate}
\end{theorem}
It follows that in condition (1)  if one root is non-real complex of multiplicity two, then the other root has to be its complex conjugate, and the chains  correspond to a CR structure. In the case of two distinct real roots, the chains arise from a para-CR structure.
Furthermore,  Theorem \ref{thm:3D-path-CR-para-CR} implies that $Q$ is an open subset of  $\PP(TN\backslash \scC)$ where $\scC$ is a naturally induced
contact distribution on the 3-manifold $N$ equipped with a splitting. Theorem \ref{thm:3D-path-CR-para-CR} will be proved in two parts. The para-CR part is proved towards the end of \ref{sec:path-geom-aris-1}  and the CR part is proved at the end of \ref{sec:3d-path-geometries}.

In condition (2)    the canonical  2-form $\rho$ on $Q$ will be expressed in terms of the Cartan connection of the 3D path geometry on the reduced structure bundle $\cG_{D}\to Q,$ as given in Propositions \ref{prop:type-D-curv-chains} and \ref{prop:type-D-curv-CR-chains}. Such a closed 2-form is sometimes referred to as a \emph{quasi-symplectic} 2-form which is  the odd dimensional analogue of a symplectic 2-form, i.e. a closed 2-form of maximal rank. In Remark \ref{rmk:chains-3d-path-geometries-exclusive} we give an alternative  description of condition (3).

All three conditions in Theorem \ref{thm:3D-path-CR-para-CR} are in principle straightforward to check for any 3D path geometry expressed either as a pair of second order ODEs or in terms of abstract structure equations. 
Moreover, we show that chains are in fact a sub-class of a  larger class of 3-dimensional path geometries characterized by the first two conditions in Theorem \ref{thm:3D-path-CR-para-CR}. This is the content of  Theorem \ref{thm:2d-path-geometries-generalized-chains} in the case of para-CR structures.

\subsection{Conventions}
\label{sec:conventions}
Our consideration in this paper will be over smooth and real manifolds.  Throughout the article we always consider path geometry in the generalized sense and therefore will not use the term ``generalized'' when talking about path geometries. When defining the leaf space of a foliation we always restrict to sufficiently small open sets where the leaf space is smooth and Hausdorff.  Consequently, given an integrable distribution $\scD$ on a manifold $N,$ by abuse of notation, we denote the leaf space of its induced foliation by $N\slash\scD.$

We will use the summation convention over  repeated upper and lower indices. Given  elements  $v_1,\dots,v_k$ of a vector space, their span is denoted by $\langle v_1,\dots,v_k\rangle.$ When dealing with differential forms, the algebraic ideal generated by 1-forms $\alpha^1,\dots,\alpha^k$ is denoted as $\{\alpha^1,\dots,\alpha^k\}.$  Given a set of 1-forms $\alpha^0,\dots,\alpha^n,\beta^1,\dots,\beta^n,$ the corresponding dual frame is denoted as $\frac{\partial}{\partial\alpha^0},\cdots \frac{\partial}{\partial\beta^n}.$
On a principal bundle $\cG\to Q$ with respect to which the 1-forms $\alpha^0,\dots,\alpha^n,\beta^1,\dots,\beta^n$ give a basis for semi-basic 1-forms,   we define the \emph{coframe derivatives} of a function $f\colon\cG\to \RR$  as 
\begin{equation*}
  f_{;i}=\tfrac{\partial}{\partial\alpha^i}\im\exd f,\   f_{;ij}=\tfrac{\partial}{\partial\alpha^j}\im\exd f_{;i},\ f_{;\underline{a}}=\tfrac{\partial}{\partial\beta^a}\im\exd f,\  f_{;\underline{ab}}=\tfrac{\partial}{\partial\beta^b}\im\exd f_{;\underline a},\  f_{;\underline{a}i}=\tfrac{\partial}{\partial\alpha^i}\im\exd f_{;\underline a},\ f_{;i\underline{a}}=\tfrac{\partial}{\partial\beta^a}\im\exd f_{;i} 
\end{equation*}
and similarly for higher orders, where $0\leq i,j\leq n$ and $1\leq a,b\leq n.$ Note that in case we reduce the structure bundle of a geometric structure to a proper principal sub-bundle, by abuse of notation, we suppress the pull-back and use the same notation as above for the coframe derivatives on the reduced bundle. 

Lastly, given two distributions $\scD_1$ and $\scD_2,$ we denote by $[\scD_1,\scD_2]$ their \emph{derived  distribution},  i.e. the distribution whose sheaf of sections is  $\Gamma([\scD_1,\scD_2])=\Gamma(\scD_1)+\Gamma(\scD_2)+[\Gamma(\scD_1),\Gamma(\scD_2)].$  In the geometries we consider, i.e. 3D path geometries, the existence of such derived distribution is always guaranteed; see Definition \ref{def:generalized-path-geom}.   The sheaf of sections of  $\bigwedge^k(T^* M)$ is denoted by $\Omega^k(M).$
\section{A review of CR, para-CR, and path geometries}  
\label{sec:path-geom-review} 
In this section we  recall some of the well-known facts about path geometries, CR and para-CR geometries in dimension three, and a solution for their equivalence problem.

\subsection{Path geometries and systems of 2nd order ODEs}
\label{sec:path-geom-defin}

A path geometry  on an $(n+1)$-dimensional manifold $M$ is classically defined as a $2n$-parameter family of paths with the property that along each direction at any point of $M,$ there passes a unique path in that family. Consequently, the natural lift of the paths of a path geometry to the projectivized tangent bundle, $\PP TM,$ results in a foliation by curves which are transverse to the fibers $\PP T_xM.$ It is well-known that a path geometry on an $(n+1)$-dimensional manifold can be locally defined in terms of  a system of $n$ second order ODEs 
 \begin{equation}\label{systemODE}
   (z^i)''=F^i(t,z,z'),\quad t\in\R,\ \ z=(z^1,\dots,z^n),\quad 1\leq i\leq n,
\end{equation}
defined up to \emph{point transformations},  i.e.  
\[t\mapsto \tilde t=\tilde t(t,z^1,\ldots,z^{n}),\quad z^i\mapsto \tilde z^i=\tilde z^i(t,z^1,\ldots,z^{n}),\quad 1\leq i\leq n.\]
 Given a path geometry, its straightforward to see how it defines a  system of $n$ 2nd order ODEs: let $(z^0,\dots,z^n)$ be local coordinates on $V\subset M$. In a sufficiently small open set $U\subset \PP TM,$ the family of paths can be parametrized as $s\mapsto \gamma(s)=(z^0(s),\dots,z^n(s))$ for $s\in (a,b)\subset\RR.$ Each path is determined by the value of  $\gamma(s),$ and $\gamma'(s)$ at $s=s_0\in U,$ thus, taking another derivative, a system of $n+1$ 2nd order ODEs of the form $\gamma''= G(\gamma,\gamma')$ is obtained for a function   $G\colon \RR^{2n+2}\to\RR^{n+1}.$ If $U\subset \PP TM$ is sufficiently small, without loss of generality one can assume $\frac{\exd z^0}{\exd s}\neq 0$ in $U.$  Since the paths are given up to reparametrization, one is able to eliminate $s$ from the system $\gamma''=G(\gamma,\gamma').$ As a result, one arrives at the system of ODEs \eqref{systemODE} where $t:=z^0.$

Conversely, starting with an  equivalence class of system of $n$ 2nd order  ODEs under point transformations  \eqref{systemODE}, the system defines  a codimension $n$ submanifold $\cE\subset J^2(\RR,\RR^n)$ of the 2-jet space of $n$ functions of 1 variable.  Pulling-back  the canonical contact system on $J^2(\RR,\RR^n)$ to $\cE,$ one can identify $\cE$ with $J^1(\RR,\RR^n)$ which is additionally foliated by contact curves, i.e. the solution curves of the ODE system.  Locally, the fibration $J^1(\RR,\RR^n)\to J^0(\RR,\RR^n)\cong\RR^{n+1}$ can be identified as an open subset of the projectivized tangent bundle $\PP TM\to M$ for an $(n+1)$-dimensional manifold $M$ and the solution curves project to a $2n$-parameter family of paths on $M.$ However, for arbitrary ODE systems it may happen that  no path is tangent to some directions at some points of $M.$

In fact,  there are many instances of  path geometries that do not fit the classical description since the paths are only defined for an open set of directions. Thus, in order to study  path geometries one is led to work with a generalized notion of such structures as defined below, which is sometimes referred to as \emph{generalized path geometry}. However, in this article since path geometries for us are always defined in this generalized sense we will not use the term generalized.
\begin{definition}
\label{def:generalized-path-geom}
  An $(n+1)$-dimensional path geometry  is given by a triple $(Q,\scX,\scV)$   where $Q$ is a $(2n+1)$-dimensional manifold  equipped with a  pair of distributions $(\scX,\scV)$ of rank 1 and $n,$ respectively,  which intersect trivially and satisfy $[\scV,\scV]=\scV$ and $[\scX,\scV]=T Q$. The $(n+1)$-dimensional local leaf space of the foliation induced by $\scV,$ denoted as $M=Q/\scV,$ is said to be equipped with the path geometry $(Q,\scX,\scV)$. 
\end{definition}
 The rank $(n+1)$  distribution $\scC$ spanned by $\scX$ and $\scV$  induces a    \emph{multi-contact} structure on $Q,$ i.e., one can write  $\scC=\ker\{\alpha^1,\ldots,\alpha^n\}$ for some 1-forms $\alpha^0,\alpha^a,\beta^b,$ 
 such that  $\scX=\ker\{\alpha^a,\beta^b\}_{a,b=1}^n,$ $\scV=\ker\{\alpha^0,\ldots,\alpha^n\}$  and 
 \begin{equation}
   \label{eq:multi-contact}
   \exd \alpha^i\equiv \alpha^0\w\beta^i\ \ \mathrm{mod\ \ }\{\alpha^1,\ldots,\alpha^n\},
    \end{equation}
for all $1\leq i\leq n$.  Path geometry on surfaces corresponds to  $n=1$ in which case $\scC$ is a contact distribution on a 3-manifold $N.$ As a result, 2-dimensional path geometries have several unique features that do not extend to higher dimensional path geometries; see Remark \ref{rmk:para-cr-2D-path}.

One can easily check that Definition \ref{def:generalized-path-geom} is satisfied for a classical path geometry on an $(n+1)$-dimensional manifold by letting $Q=\PP TM$ and $\scV$ be the vertical tangent space of the fibration $\PP TM\to M$  and $\scX$ be the line field tangent to the natural lift of the path on $M$ to $\PP TM.$  In terms of the system of second order ODEs \eqref{systemODE} one has
\begin{equation}
  \label{eq:TotalDerivative}
  \scX=\spn\left\{\tfrac{\mathrm{D}}{\exd t}\right\}\quad \text{where}\quad \tfrac{\mathrm{D}}{\exd t}:=\partial_t+\sum_{i=1}^n p^i\partial_{z^i}+\sum_{i=1}^n F^i\partial_{p^i},
\end{equation}
and
\[
\scV=\spn\{\partial_{p^1},\ldots,\partial_{p^n}\},
\]
where $p^i$'s are fiber coordinates for an affine chart of  $\PP TM$. The vector field $\tfrac{\mathrm{D}}{\exd t}$ spanning $\scX$ is often called the total derivative vector field.

As was mentioned before not all geometries arising from Definition \ref{def:generalized-path-geom} are classical path geometries. However, restricting to  sufficiently small open sets   $U\subset Q$ in Definition \ref{def:generalized-path-geom},  $U$ can be realized as an open set of $\PP TM$ for the $(n+1)$-dimensional manifold $M$ which is the leaf space of $\scV.$ Consequently,  $\scX$ foliates $U\subset\PP TM$ by curves that are transverse to the fibers of $\PP TM\rightarrow M.$ We refer the reader to \cite[Section 2]{Bryant-ProjFlat} for more about path geometry on surfaces and \cite[Section 4.4.2,4.4.4]{CS-Parabolic}  in higher dimensions.

Two path geometries $(Q_i,\scX_i,\scV_i),$ $i=1,2$ are called equivalent if there exists a  diffeomorphism $f\colon Q_1\to Q_2$ such that $f_*(\scX_1)=\scX_2$ and $f_*(\scV_1)=\scV_2.$ 

To determine when two path geometries are locally equivalent, we provide a solution to the equivalence problem of path geometries using the notion of a Cartan geometry and Cartan connection as defined below.  
\begin{definition}
  Let $G$ be a Lie group and $P\subset G$ a Lie subgroup  with Lie algebras $\fg$ and $\fp\subset\fg,$ respectively.  A Cartan geometry of type $(G,P)$ on $Q,$ denoted as $(\cG\to Q,\psi),$ is a right principal $P$-bundle $\tau\colon\cG\to Q$ equipped with a Cartan connection $\psi\in\Omega^1(\cG,\fg),$ i.e. a $\fg$-valued 1-form on $\cG$ satisfying
  \begin{enumerate}
  \item $\psi$ is $P$-equivariant, i.e. $r_g^*\psi=\mathrm{Ad}_{g^{-1}}\circ\psi$ for all $g\in P.$
  \item  $\psi_z\colon T_z\cG\to \fg$ is  a linear isomorphism for all $z\in \cG.$
  \item $\psi$ maps fundamental vector fields to their generators, i.e. $\psi(\zeta_X)=X$ for any $X\in\fp$ where $\zeta_X(z):=\frac{\exd}{\exd t}\,\vline_{\,t=0}r_{exp(tX)}(z).$
  \end{enumerate} 
 The 2-form $\Psi\in\Omega^2(\cG,\fg)$ defined as
    \[\Psi(X,Y)=\exd\psi(X,Y)+[\psi(X),\psi(Y)]\quad \text{for\ \ }X,Y\in \Gamma(T\cG),\]
is called the Cartan curvature and is $P$-equivariant and semi-basic with respect to the fibration $\cG\to Q.$
\end{definition} 
The following solution of the equivalence problem for path geometries is due to Grossman and Fels  \cite{Grossman-Thesis,Fels}. 
\begin{theorem}\label{thm:path-geom-cartan-conn}
Every path geometry $(Q^{2n+1},\scX,\scV),$  as in Definition \ref{def:generalized-path-geom},   defines Cartan geometry  $(\cG\to Q,\psi)$ of type $(\mathrm{PSL}(n+2,\RR),P_{12})$ where $P_{12}\subset \mathrm{PSL}(n+2,\RR)$ is the parabolic subgroup preserving the flag of a line and a 2-plane in $\RR^{n+3}.$ Assume that  the distributions $\scX$ and $\scV$  are the projection of the distributions  $\ker\{\alpha^i,\beta^i\}_{i=1}^n$ and $\ker\{\alpha^i\}_{i=0}^n,$ respectively.   When $n\geq 2$, the Cartan connection and its curvature can be  expressed as
  \begin{equation}
  \label{eq:path-geom-cartan-conn-3D}  
  \psi=
  \def\arraystretch{1.3}
\begin{pmatrix}    
 -\psi^i_i-\psi^0_0 &\mu_0& \mu_j\\
\alpha^0 &\psi^0_0&\nu_j\\
\alpha^i&  \beta^i& \psi^i_j\\
\end{pmatrix}    
\quad \mathrm{and}\quad \Psi:=\exd\psi+\psi\w\psi=
  \def\arraystretch{1.3}
\begin{pmatrix}    
0 &M_0& M_j\\
0 &\Psi^0_0&V_j\\
0 &   B^i& \Psi^i_j\\
\end{pmatrix},    
    \end{equation}
where $1\leq i,j\leq n$ and 
\[B^i=T^i_j\alpha^0\w\alpha^j+\half T^i_{jk}\alpha^j\w\alpha^k,\qquad \mathrm{and}\quad \Psi^i_j=C^i_{jkl}\alpha^k\w\beta^l+\half T^i_{jkl}\alpha^k\w\alpha^l +T^i_{j0k}\alpha^0\w\alpha^k. \]
The fundamental invariants of a path geometry  are the \emph{torsion}, $\mathbf{T}=(T^i_j)_{1\leq i,j\leq n},$ and the \emph{curvature}, $\mathbf{C}=(C^i_{jkl})_{1\leq i,j,k,l\leq n}$, satisfying
\begin{equation}
  \label{eq:fels-invariants-path}
  T^i_i=0,\qquad C^i_{jkl}=C^i_{(jkl)},\qquad C^i_{ijk}=0.
\end{equation}
\end{theorem}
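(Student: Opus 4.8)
The plan is to realize $(Q,\scX,\scV)$ as a regular, normal parabolic geometry of type $(G,P_{12})$ with $G=\mathrm{PSL}(n+2,\RR)$, and then read off the displayed structure equations from the block decomposition of $\mathfrak{sl}(n+2,\RR)$ adapted to the flag stabilised by $P_{12}$. The first step is to set up the $|2|$-grading $\fg=\fg_{-2}\oplus\fg_{-1}\oplus\fg_0\oplus\fg_1\oplus\fg_2$ attached to a decomposition $\RR^{n+2}=V_1\oplus V_2\oplus V_3$ with $\dim V_1=\dim V_2=1$ and $\dim V_3=n$; here $\fg_{-1}=\mathrm{Hom}(V_1,V_2)\oplus\mathrm{Hom}(V_2,V_3)$, $\fg_{-2}=\mathrm{Hom}(V_1,V_3)$, $\fg_0=\mathfrak{s}(\mathfrak{gl}(V_1)\oplus\mathfrak{gl}(V_2)\oplus\mathfrak{gl}(V_3))$, and the Levi bracket $\fg_{-1}\times\fg_{-1}\to\fg_{-2}$ is composition of homomorphisms. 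One then matches this against Definition \ref{def:generalized-path-geom}: the line field $\scX$ corresponds to $\mathrm{Hom}(V_1,V_2)$, the rank-$n$ bundle $\scV$ to $\mathrm{Hom}(V_2,V_3)$, and the hypotheses $[\scV,\scV]=\scV$ and $[\scX,\scV]=TQ$ say precisely that the Levi bracket of the associated graded $\mathrm{gr}(TQ)$ agrees with the model bracket (in particular $\scV$ integrable mirrors the fact that two homomorphisms $V_2\to V_3$ do not compose). Thus $(Q,\scX,\scV)$ is a regular infinitesimal flag structure of type $(G,P_{12})$.

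Next I would invoke the general equivalence of categories for parabolic geometries (Tanaka--Morimoto, \v{C}ap--Slov\'ak; see \cite{CS-Parabolic}), or equivalently carry out Cartan's equivalence method directly on the adapted coframing $(\alpha^0,\alpha^i,\beta^i)$ of \ref{sec:path-geom-defin}. The one point to verify is that $H^1(\fg_-,\fg)$ is concentrated in positive homogeneity, so that no further reduction of the $\fg_0$-structure is forced and the underlying geometry is exactly the filtered manifold with its splitting; for $P_{12}\subset\mathrm{PSL}(n+2,\RR)$ this is a short Kostant computation. Granting this, there is a canonical regular normal Cartan connection $\psi\in\Omega^1(\cG,\fg)$ on a principal $P_{12}$-bundle $\tau\colon\cG\to Q$, unique up to isomorphism, whose $\fg_-$-part recovers $(\alpha^0,\alpha^i,\beta^i)$; the remaining slots are the $\fg_0$-forms $(\psi^0_0,\psi^i_j)$ and the $\fp_+$-forms $(\mu_0,\mu_j,\nu_j)$. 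Writing $\fg=\mathfrak{sl}(n+2,\RR)$ in $1+1+n$ block form adapted to the flag then produces exactly the matrix for $\psi$ in \eqref{eq:path-geom-cartan-conn-3D}, and since $\Psi=\exd\psi+\psi\w\psi$ is horizontal and $P_{12}$-equivariant, regularity together with normality ($\partial^*\Psi=0$) force its $\fg_-$-valued components --- the first column --- to vanish, giving the displayed form of $\Psi$.

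The last step is to pin down the harmonic curvature, which by Kostant's theorem is the lowest-homogeneity part of $\Psi$ and, via the Bianchi identity $\exd\Psi=\Psi\w\psi-\psi\w\Psi$ (spelled out in the Appendix), determines all higher components. For $n\geq 2$ the $\fg_0$-module $H^2(\fg_-,\fg)$ has exactly two irreducible summands: one in homogeneity $1$ supplying the torsion $\mathbf{T}=(T^i_j)$, appearing as the leading coefficient of $B^i$; and one in homogeneity $2$ supplying the curvature $\mathbf{C}=(C^i_{jkl})$, appearing in $\Psi^i_j$. Every other entry of $\Psi$ --- $M_0, M_j, V_j, \Psi^0_0, T^i_{jk}, T^i_{j0k}, T^i_{jkl}$ --- is then a coframe derivative of $\mathbf{T}$ or $\mathbf{C}$, obtained by differentiating and reapplying normality. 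The relations \eqref{eq:fels-invariants-path} are read off the Kostant weights: $T^i_i=0$ because the homogeneity-$1$ summand is the trace-free ($\mathfrak{sl}(V_3)$-type) part of $\mathrm{Hom}(V_3,V_3)$ (suitably weighted), while $C^i_{jkl}=C^i_{(jkl)}$ and $C^i_{ijk}=0$ because the homogeneity-$2$ summand is the Cartan (highest-weight) piece of the relevant tensor product, which carries exactly the ``totally symmetric, trace-free in the lower indices'' Young symmetry.

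The main obstacle is bookkeeping rather than any single hard idea. On the algebraic side it is the Kostant computation needed to show $H^1(\fg_-,\fg)$ has no obstructing components and to identify the two summands of $H^2(\fg_-,\fg)$ together with their highest weights, matched against the index symmetries in \eqref{eq:fels-invariants-path}. On the differential-geometric side it is the absorption-and-normalisation procedure --- equivalently, solving $\partial^*\Psi=0$ order by order in homogeneity --- that upgrades the adapted coframe to the Cartan connection and expresses the non-harmonic curvature entries through derivatives of $\mathbf{T}$ and $\mathbf{C}$. The surface case $n=1$ is handled separately: it is Cartan's original treatment \cite{Cartan-Proj} using the full flag variety of $\mathrm{PSL}(3,\RR)$, but it fits the same scheme.
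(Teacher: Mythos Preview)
The paper does not prove this theorem; it is stated as a known result, attributed to Cartan \cite{Cartan-Proj} for $n=1$ and to Grossman and Fels \cite{Grossman-Thesis,Fels} for $n\geq 2$, so there is no in-text argument to compare against.

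Your outline via regular normal parabolic geometries is the modern route (as in \cite{CS-Parabolic}) and is correct in structure; the original arguments of Fels and Grossman instead run Cartan's equivalence method directly on an adapted coframing of the ODE system, which you also acknowledge as an alternative. Two small inaccuracies are worth flagging. First, the cohomological hypothesis you need is that $H^1(\fg_-,\fg)$ \emph{vanishes} in positive homogeneity (equivalently, is concentrated in non-positive homogeneity), not that it is concentrated there --- this is precisely the condition guaranteeing that the regular infinitesimal flag structure already determines the normal Cartan connection without further reduction. Second, with $\alpha^0,\beta^i\in\fg_{-1}$ and $\alpha^i\in\fg_{-2}$, the torsion piece $T^i_j\,\alpha^0\wedge\alpha^j$ (valued in the $\fg_{-1}$ slot $B^i$) has homogeneity $2$ and the curvature piece $C^i_{jkl}\,\alpha^k\wedge\beta^l$ (valued in the $\fg_0$ slot $\Psi^i_j$) has homogeneity $3$, not $1$ and $2$. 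Neither point affects the overall strategy of your argument.
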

\begin{remark}\label{rmk:1-form-defined-on-G-or-Q}
  We point out that unlike in equation  \eqref{eq:multi-contact}, the 1-forms in Theorem \ref{thm:path-geom-cartan-conn} are defined on the principal bundle $\cG$ rather than the manifold $Q.$   If it is clear from the context that one needs to take a section $s\colon Q\to\cG$ and consider $s^*\psi,$ we will not make a distinction between 1-forms defined on $\cG$ or $Q.$
  \end{remark}

If $(t,z^i)$ and $(t,z^i,p^i)$ are   local jet coordinates on $\RR\times \RR^{n}=J^0(\RR,\RR^n)$ and $J^1(\RR,\RR^n),$ respectively, then, in terms of the system of ODEs \eqref{systemODE}, one obtains 
\begin{equation}
  \label{eq:fels-invariants-explicit}
  T^i_j=F^i_j-\textstyle{\frac{1}{2}\delta^i_jF^k_k},\qquad\qquad C^i_{jkl}=F^i_{jkl}-\textstyle{\frac{3}{4}}F^r_{r(jk}\delta^i_{l)}
\end{equation}
where $1\leq i,j,k,l\leq n$ and 
\begin{equation}
  \label{eq:fels-torsion-explicit-components}
  F^i_{j}=\textstyle{- \partial_{z^j}F^i+\frac{1}{2}\Dt (\partial_{p^j}F^i)-\frac{1}{4} \partial_{p^k}F^i\partial_{p^j}F^k,}\quad F^i_{jkl}=\partial_{p^j}\partial_{p^k}\partial_{p^l}F^i  ,\quad \Dt=\partial_t+p^i\partial_{z^i}+F^i\partial_{p^i}.
\end{equation}

Cartan's solution of the equivalence problem for path geometries on surfaces is treated in the next section since 2-dimensional path geometries coincide with 3-dimensional para-CR structures.

Here we are interested in path geometries on 3-dimensional manifolds, i.e.  $Q$ in Definition \ref{def:generalized-path-geom} is 5-dimensional, which correspond to point equivalence class of pairs of 2nd order ODEs. The following proposition  whose proof is straightforward and therefore  skipped, will be important for us.

\begin{proposition}\label{prop:3D-path-geom}
In three-dimensional path geometries the fundamental invariants $\bT$ and $\bC$ in Theorem \ref{thm:path-geom-cartan-conn}, as $\mathrm{GL}_2(\RR)$-modules,  can be represented as a  binary  quadric and a binary quartic, respectively, given by
\begin{equation}
  \label{eq:quadric-quartic}
  \begin{aligned}
 \bT&= s^*(A_0(\beta^1)^2+2A_1\beta^1\beta^2+A_2(\beta^2)^2)\otimes V\otimes X^{-2},\\
\bC&=s^*(W_0(\beta^1)^4+4W_1(\beta^1)^3(\beta^2)+6W_2(\beta^1)^2(\beta^2)^2+4W_3(\beta^1)(\beta^2)^3+W_4(\beta^2)^4)\otimes V\otimes X^{-1} \\
\end{aligned}
\end{equation}
where $s\colon Q\to\cG$ is a section, $\bT\in \Gamma(\mathrm{Sym}^2(\scV^*)\otimes\bigwedge^2\scV\otimes\scX^{-2}),$ $\bC\in \Gamma(\mathrm{Sym}^4(\scV^*)\otimes\bigwedge^2\scV\otimes\scX^{-1}),$ $X:=\frac{\partial}{\partial s^*\alpha^0}\in\Gamma(\scX),$  $V:=\frac{\partial}{\partial s^*\beta^1}\w\frac{\partial}{\partial s^*\beta^2}\in\Gamma(\bigwedge^2\scV),$ 
\[
\begin{gathered}
A_0=T^2_1,\quad A_1=T^2_2,\quad A_2=-T^1_2,\\
  W_0=C^2_{111},\quad W_1=C^2_{211},\quad W_2=C^2_{221},\quad W_3=C^2_{222},\quad W_4=-C^1_{222},
\end{gathered}
\]
and $(\tfrac{\partial}{\partial s^*\alpha^0},\tfrac{\partial}{\partial s^*\alpha^1},\tfrac{\partial}{\partial s^*\alpha^2},\tfrac{\partial}{\partial s^*\beta^1},\tfrac{\partial}{\partial s^*\beta^2})$ denote the vector fields dual to the coframe $s^*(\alpha^0,\alpha^1,\alpha^2,\beta^1,\beta^2)$ in the Cartan connection \eqref{eq:path-geom-cartan-conn-3D}.
Moreover, $Q$ is equipped with a degenerate conformal structure given by  $[s^*h]\subset\mathrm{Sym^2}(T^*Q)$ where
\begin{equation}
  \label{eq:degenerate-conformal-str}
  h:=\alpha^1\beta^2-\alpha^2\beta^1\in\Gamma(\mathrm{Sym}^2(T^*\cG)).
  \end{equation}
\end{proposition}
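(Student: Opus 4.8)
The plan is to prove Proposition~\ref{prop:3D-path-geom} by unwinding the tensorial normalization conditions \eqref{eq:fels-invariants-path} on $\bT$ and $\bC$ satisfied by the Fels invariants $T^i_j$ and $C^i_{jkl}$ in the case $n=2$, and then checking that the proposed binary forms are well-defined, i.e.\ that their coefficients transform correctly under the structure group $P_{12}$. First I would specialize the trace-free condition $T^i_i=0$: with $i\in\{1,2\}$ this reads $T^1_1+T^2_2=0$, so the three independent components of $\bT$ are exactly $T^2_1, T^2_2=-T^1_1, -T^1_2$, which I rename $A_0,A_1,A_2$. Similarly, the total symmetry $C^i_{jkl}=C^i_{(jkl)}$ together with the trace condition $C^i_{ijk}=0$ (i.e.\ $C^1_{1jk}+C^2_{2jk}=0$ for all $j,k$) cuts the independent components of $\bC$ down to five. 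Taking $C^2_{111},C^2_{211},C^2_{221},C^2_{222}$ and $-C^1_{222}$ as the free parameters and naming them $W_0,\dots,W_4$, one checks using $C^1_{1jk}=-C^2_{2jk}$ and the symmetry that every other component is determined; this is the bookkeeping step and is routine.

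The substantive step is to verify that the formal expressions \eqref{eq:quadric-quartic} are genuinely sections of the claimed line-bundle-twisted symmetric powers of $\scV^*$, independent of the choice of section $s\colon Q\to\cG$. For this I would examine how the semibasic coframe $(\alpha^0,\alpha^1,\alpha^2,\beta^1,\beta^2)$ and the components $T^i_j, C^i_{jkl}$ transform under the action of the structure group $P_{12}\subset \mathrm{PSL}(4,\RR)$, using the $P$-equivariance property $r_g^*\psi=\mathrm{Ad}_{g^{-1}}\circ\psi$ from Theorem~\ref{thm:path-geom-cartan-conn}. The relevant subgroup acts on $(\alpha^1,\alpha^2)$ and on $(\beta^1,\beta^2)$ by the same $\mathrm{GL}_2(\RR)$ factor appearing in the block $\psi^i_j$ (up to the scaling by the $\psi^0_0$ and the determinant/trace parts that produce the powers of $X$), so that $\mathrm{Sym}^2$ of the $\beta$'s weighted by $\bigwedge^2\scV\otimes\scX^{-2}$ is exactly the representation in which $(A_0,A_1,A_2)$ transforms as the curvature block $B^i$ scales, and likewise $\mathrm{Sym}^4$ weighted by $\bigwedge^2\scV\otimes\scX^{-1}$ for $(W_0,\dots,W_4)$ matching the scaling of $\Psi^i_j$. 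Concretely, from the curvature formulas $B^i=T^i_j\alpha^0\w\alpha^j+\dots$ and $\Psi^i_j=C^i_{jkl}\alpha^k\w\beta^l+\dots$, one reads off that $T^i_j$ carries one lower $\scV$-index via $\alpha^j$ restricted to $\scV$ being dual to $\beta^j$, and one upper index, together with the weight coming from $\alpha^0\in\scX^*$; matching these weights against the stated bundle $\mathrm{Sym}^2(\scV^*)\otimes\bigwedge^2\scV\otimes\scX^{-2}$ is the crux. The main obstacle, then, is carefully tracking the conformal weights (the powers of $X$ and the factors of $\bigwedge^2\scV$) through the $P_{12}$-action so that the $\mathrm{GL}_2(\RR)$-module structure claimed in the statement is exactly reproduced; this is where sign conventions and the precise embedding of $P_{12}$ matter, but it is essentially representation-theoretic bookkeeping rather than a deep difficulty.

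Finally, for the last assertion about the degenerate conformal structure, I would observe that $h=\alpha^1\beta^2-\alpha^2\beta^1$ is, up to sign, the determinant of the $2\times 2$ matrix of $1$-forms $\bigl(\begin{smallmatrix}\alpha^1&\alpha^2\\\beta^1&\beta^2\end{smallmatrix}\bigr)$, hence under the $\mathrm{GL}_2(\RR)$ part of $P_{12}$ it rescales by the determinant squared (times the weight from $\psi^0_0$), so that its conformal class $[s^*h]$ on $Q$ is independent of the section $s$ and of the residual structure-group freedom; its signature is $(2,2)$ with a two-dimensional kernel along $\scV\cap\ker h$ accounting for the word ``degenerate''. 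I would also note consistency with \eqref{eq:path-geom-cartan-conn-3D}: the off-diagonal entries $\alpha^i,\beta^i$ are precisely the semibasic forms entering $h$, and $\exd h$ computed from the structure equations lies in the ideal generated by the curvature, confirming that $[h]$ is a well-defined invariant of the path geometry. Since the proposition is flagged in the excerpt as having a straightforward proof that will be skipped, I would keep this verification brief, emphasizing only the transformation law of $h$ and of the coefficient vectors $(A_j)$, $(W_k)$ under $P_{12}$.
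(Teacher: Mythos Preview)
Your approach is exactly what the paper has in mind: it explicitly states that the proof ``is straightforward and will be skipped,'' and the straightforward verification is precisely the component count using the trace-free and symmetry conditions \eqref{eq:fels-invariants-path}, followed by the $P_{12}$-equivariance check you outline. One small correction: the kernel of $h=\alpha^1\beta^2-\alpha^2\beta^1$ on $TQ$ is the line $\scX=\langle\tfrac{\partial}{\partial\alpha^0}\rangle$, not a two-dimensional subspace of $\scV$, and your remark about $\exd h$ lying in a curvature ideal is not meaningful for a symmetric tensor---what matters (and what you correctly identify earlier) is that $h$ rescales under $P_{12}$ so only its conformal class descends to $Q$.
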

In what follows, we will need the Bianchi identities among the entries of the $\bC$ and $\bT$ given by
\begin{equation}
  \label{eq:W-A-curvature-torsion-Bianchies}
  \begin{aligned}
    \exd W_0 \equiv& 4 W_0 \psi^1_1 + 4 W_1 \psi^2_1,\quad     &&\exd W_1 \equiv W_0 \psi^1_2 + (3  \psi_1^1 +  \psi^2_2)W_1 + 3 W_2 \psi^2_1\\ 
    \exd W_2 \equiv& 2 W_1 \psi^1_2 + (2 \psi^1_1 + 2  \psi^2_2)W_2 + 2 W_3 \psi^2_1,\quad    && \exd W_3 \equiv 3 W_2 \psi^1_2 + (\psi^1_1 + 3 \psi^2_2)W_3 + W_4 \psi^2_1\\
    \exd W_4 \equiv& 4 W_3 \psi^1_2 + 4 W_4 \psi^2_2,\quad     &&\exd A_0 \equiv (4 \psi^0_0 + 3 \psi^1_1 + \psi^2_2)A_0 + 2 A_1 \psi^2_1\\
    \exd A_1 \equiv& A_0 \psi^1_2 + (4 \psi^0_0 + 2 \psi^1_1 + 2 \psi^2_2)A_1 + A_2 \psi^2_1,\quad 
    &&\exd A_2 \equiv 2 A_1 \psi^1_2 + (4 \psi^0_0 + \psi^1_1 + 3  \psi^2_2)A_2.
  \end{aligned}
\end{equation}
 modulo $\{\alpha^0,\alpha^1,\alpha^2,\beta^1,\beta^2\}.$ Using  Theorems \ref{thm:path-geom-cartan-conn} and \ref{thm:2D-path-geome}, it follows that two path geometries $(Q_i,\scX_i,\scV_i),$ $i=1,2$  are locally equivalent if and only if for their respective Cartan geometries $(\cG_i\to Q_i,\psi_i)$ there is a bundle diffeomorphism $f\colon\cG_1\to\cG_2$ such that $f^*\psi_2=\psi_1.$

Furthermore, if the fundamental invariants vanish, i.e. $\bC=\bT=0,$ then the Cartan curvature is zero and the Cartan connection satisfies the Maurer-Cartan equations of $\mathfrak{sl}(n+2,\RR)$ and hence the path geometry is locally equivalent to the canonical one on $\PP^{n+1}$ whose paths are projective lines.  If $\bC=0,$ then the path geometry defines a projective structure on the $(n+1)$-dimensional leaf space of $\scV,$ denoted by $M.$ If $\bT=0,$ then  the $2n$-dimensional leaf space of $\scX$, i.e. the  solution space of the corresponding system of ODEs, is endowed with a so-called $\beta$-integrable Segr\'e structure when $n\geq 2.$

Note that when $n=2$ and $\bT=0,$ then the induced $\beta$-integrable Segr\'e structure on the leaf space of $\scX,$ is  a 4-dimensional self-dual conformal structure of neutral signature which is defined by the conformal class of the bilinear form  $h$ in Proposition \ref{prop:3D-path-geom}. In this case $\bC$ gives the binary quartic representation of the self-dual Weyl curvature  of this  conformal structure.

\subsection{CR and para-CR 3-manifolds} 
\label{sec:cr-3-manifolds}
 
In this section we recall the definition of  CR and para-CR structures in dimension three and give a solution of their local equivalence problem. We will point out that para-CR structures in dimension 3 are equivalent to path geometries on surfaces and therefore can be described as the point equivalence class of  scalar 2nd order ODEs.

Recall that an almost complex structure on a  distribution $\scD\subset TN$ with even rank is an endomorphism $J_{-1}\colon \scD\to \scD$ such that $(J_{-1})^2=-\mathrm{Id}$ where $\mathrm{Id}\colon \scD\to \scD$ is the identity map. An almost para-complex structure on $\scD$ is defined by a linear map $J_{1}\colon \scD\to \scD$ such that $(J_{1})^2=\mathrm{Id}$ with the property that its $\pm 1$-eigenspaces have the same rank. As a result, we denote an almost (para-)complex structure by $J_\ve\colon \scD\to \scD$ such that $(J_\ve)^2=\ve\mathrm{Id}$ where $\ve$ is -1 and 1 for almost complex and para-complex structures, respectively. An almost (para-)complex structure is called \emph{integrable} if its $\pm\sqrt{\ve}$-eigenspaces are involutive as distributions in  $\KK_\ve\otimes \scD$ where $\KK_\ve:=\RR[\sqrt{\ve}],$ i.e. $\KK_{1}=\RR$ and $\KK_{-1}=\CC.$ Note that if $\scD$ has rank 2 then an almost (para-)complex structure on $\scD$ is always integrable.
\begin{definition}\label{def:cr-para-cr}
  A non-degenerate (para-)CR structure of hypersurface type on a $(2n+1)$-dimensional manifold $N$  is given by a contact distribution $\scC\subset TN$ with an integrable almost (para-)complex structure $J_\ve\colon\scC\to\scC$  such that $\mathfrak{L}(J_\ve X,J_\ve Y)=-\ve\mathfrak{L}(X,Y)$ where $\mathfrak{L}_x :(X,Y)\to [X,Y]_x\slash\scC_x\in T_xN\slash\scC_x$ for $X,Y\in\Gamma(TN)$ is the Levi bracket.  
\end{definition}
Identifying $TN\slash\scC$ at each point $x\in N$ with $\RR,$ the map $\mathfrak L_x$ corresponds to the imaginary part of a (para-)Hermitian inner product whose  signature is $(p,q), p+q=n$ when $\ve=-1$ and is $(n,n)$ when $\ve=1.$    The induced action of $J_\ve$ on $\KK_\ve\scC:=\KK_\ve\otimes\scC$ results in the splitting $\KK_\ve\scC=\scH_\ve\oplus\bar\scH_\ve,$ where $\scH_\ve$ and $\bar\scH_\ve$ are (para-)holomorphic and anti-(para-)holomorphic sub-bundles of $\KK_\ve\scC$ for the (para-)complex structure defined by $J_\ve,$ respectively. Note that as a result of the compatibility condition, in the para-CR case both $\scH$ and $\bar\scH$ are null with respect to the induced inner product on $\KK_1\scC\cong \scC.$ 
Furthermore, by the  compatibility condition in Definition \ref{def:cr-para-cr} in the para-CR case, it follows that  in the splitting  $\scC=\scH\oplus\bar\scH$ the integrable distributions $\scH$ and $\bar\scH$ are Lagrangian  with respect to the induced symplectic structure on $\scC.$ As a result, a para-CR structure is also referred to as an integrable contact Legendrian/Lagrangian structure (see \cite{CZ-CR,DMT} for more information.) As was mentioned above, the case of  contact 3-manifolds  is special since any almost (para-)complex structure on their contact distribution is automatically integrable.

Two (para-)CR structures, denoted as $(N,\scC,J_{\ve})$ and $(\tilde N,\tilde\scC,\tilde J_{\ve})$  are equivalent if there is a diffeomorphism $F\colon N\to \tilde N$ whose pushforward $F_*\colon TN \to T\tilde N$ and its extension to $\KK_\ve\otimes TN$ satisfy $F_*\scC=\tilde\scC$ and $F_*\scH_\ve=\tilde\scH_\ve$ and $F_*\bar\scH_\ve=\tilde{\bar \scH}_\ve$. Note that when $\ve=-1,$ the condition $F_*\bar\scH_\ve=\tilde{\bar \scH}_\ve$ is redundant.    Similarly, one can define  local equivalence of two (para-)CR structures at two points by finding such a diffeomorphism in sufficiently small neighborhoods of those points.

\begin{remark}\label{rmk:para-cr-2D-path}
Setting $n=1$ in Definition \ref{def:cr-para-cr},  one obtains that a 3-dimensional para-CR structure is given by a splitting $\scC=\scD_1\oplus \scD_2$ where $\scD_1$ and $\scD_2$ are rank one distributions. Furthermore, by Definitions  \ref{def:generalized-path-geom} it follows that a 2-dimensional (generalized) path geometry is defined by a contact 3-manifold whose contact distribution has a splitting into two line fields. Thus, the definition of 3-dimensional para-CR structures coincides with 2-dimensional path geometries. As a result, by our discussion in  \ref{sec:path-geom-defin} one can describe a 3-dimensional para-CR structure as the point equivalence class of a scalar 2nd order ODE, i.e. $z''=F(x,z,z')$.  In the case of  3-dimensional CR structures, when we want to give a  parametric expression we will consider induced CR structures on   hypersurfaces $N\subset \CC^2.$ In this case,  in some local coordinate system,    one can describe $N\subset \CC^2$ locally as  a graph  $\Im(w)=G(z,\bar z,\Re(w))$  where $(w,z)$ are local coordinates for $\CC^2.$ Such parametric descriptions of CR and para-CR structures will be used in \ref{sec:corr-pair-2nd}.
\end{remark}
Restricting to 3-dimensional CR structures, the following theorem of Cartan gives a solution of the equivalence problem in terms of a Cartan geometry. We refer the reader to \cite{Jacobowitz,Bryant-CR,CS-Parabolic} for more background on the geometry of CR structures.
\begin{theorem}[Cartan \cite{Cartan-CR2,Cartan-CR1}]\label{thm:CR-equiv-problem}
A 3-dimensional CR structure is a Cartan geometry  $(\cP\to N,\phi)$ of type $(\mathrm{SU}(2,1),P)$ where $P$ is the stabilizer of a  null line in $\CC^3.$  Expressing the Cartan connection as
   \begin{equation}
    \label{eq:psi-3D-CR}
    \phi=
    \begin{pmatrix}
      -\phi_0-\tfrac 13\ri\phi_1 & \ri\theta & \ri\theta_0\\
      \bar\omega & \tfrac 23\ri\phi_1 & -\ri\bar\theta\\
      \ri\omega^0& \omega & \phi_0-\tfrac 13\ri\phi_1
    \end{pmatrix},\quad \omega=\omega^1+\ri\omega^2,\quad \theta=\theta_1+\ri\theta_2.
  \end{equation}
 which  is $\mathfrak{su}(2,1)$-valued with respect to the Hermitian form $H$ in three variables 
\[H(Z)=Z_3\bar Z_1-Z_2\bar Z_2+Z_1\bar Z_3,\quad Z=(Z_1,Z_2,Z_3)\in\CC^3,\]
the Cartan curvature is given by
  \begin{equation}\label{eq:CartanCurv2Form}
    \Phi=\exd\phi+\phi\w\phi=
    \begin{pmatrix}
      0 & R\omega^0\w\omega & \ri(S\omega^0\w\omega+\bar S\omega^0\w\bar\omega) \\
      0 & 0 & \bar R\omega^0\w\bar\omega\\
      0 & 0 & 0
    \end{pmatrix},\quad R=R_1+\ri R_2,\quad S=S_1+\ri S_2.
  \end{equation}
  for two complex-valued functions $R$ and $S$ on $\cP.$ The structure function $R$ is defined up to a complex scale on $N$ and its vanishing implies  $\Phi=0$ and that the CR structure is locally equivalent to the hyperquadric $\Im(w)=\tfrac{1}{4}z\bar z$ in $\CC^2$ where $(z,w)$ are local coordinates on $\CC^2.$
\end{theorem}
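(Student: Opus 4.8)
The plan is to obtain the Cartan connection by Cartan's method of equivalence applied to the natural $G$-structure carried by a non-degenerate $3$-dimensional CR structure, and then to read off the curvature normal form and the consequences of flatness. So, first, set up the adapted coframe bundle: for $(N,\scC,J)$, a point of a bundle $\cP_1\to N$ over $x$ is a complex coframe $(\omega^0,\omega,\bar\omega)$ at $x$ with $\omega^0$ real, $\ker\omega^0_x=\scC_x$, $\omega$ of type $(1,0)$ for $J$ on $\scC_x$, and normalized by $\exd\omega^0\equiv\i\,\omega\w\bar\omega\pmod{\omega^0}$; this last normalization is available precisely because the Levi form is non-degenerate (and in dimension $3$ necessarily definite). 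The residual freedom is $\omega^0\mapsto a\bar a\,\omega^0$, $\omega\mapsto a\,\omega+c\,\omega^0$ with $a\in\CC^\times$, $c\in\CC$, giving a $4$-dimensional structure group which, after the prolongation below, is enlarged to $P$.

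Next, run the absorption--normalization loop. Writing out $\exd\omega^0$ and $\exd\omega$ on $\cP_1$ introduces connection $1$-forms defined up to semibasic shifts; one absorbs all absorbable torsion, and the essential torsion that survives is removed by successive Cartan reductions, producing the real forms $\phi_0,\phi_1$ and the complex form $\theta=\theta_1+\i\theta_2$ with all reality relations among them forced. A residual one-parameter ambiguity, corresponding to the top graded piece $\fg_2\subset\mathfrak{su}(2,1)$, is absorbed by a single prolongation that introduces the last form $\theta_0$. The process then terminates: collecting $(\omega^0,\omega,\bar\omega,\phi_0,\phi_1,\theta,\bar\theta,\theta_0)$ gives an $\{e\}$-structure on the $8$-dimensional bundle $\cP\to N$. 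The structural reason for termination is that the Tanaka prolongation of the CR symbol (the Heisenberg algebra equipped with $J$) is the finite-dimensional algebra $\mathfrak{su}(2,1)$, so no enlargement of the bundle can absorb further essential torsion.

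Then assemble and compute. One checks that the $3\times3$ matrix $\phi$ built from the above coframe is $\mathfrak{su}(2,1)$-valued for the Hermitian form $H$ and satisfies the three Cartan-connection axioms --- $P$-equivariance is inherited from the reductions, the pointwise isomorphism property is the $\{e\}$-structure condition, and reproduction of fundamental vector fields holds by construction --- so $(\cP\to N,\phi)$ is a Cartan geometry of type $(\mathrm{SU}(2,1),P)$. Computing $\Phi=\exd\phi+\phi\w\phi$ and using $P$-equivariance of the curvature function together with the normalization conditions imposed above forces $\Phi$ into the displayed shape, with one complex curvature $R$ at order $\omega^0\w\omega$ and a secondary function $S$; the Bianchi identity $\exd\Phi=\Phi\w\phi-\phi\w\Phi$ shows $S$ is a coframe derivative of $R$, so $R\equiv 0$ gives $S\equiv 0$ and hence $\Phi\equiv 0$. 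A flat Cartan connection satisfies the Maurer--Cartan equation $\exd\phi=-\phi\w\phi$, so by the standard uniqueness theorem for Cartan connections $(\cP,\phi)$ is locally isomorphic to $(\mathrm{SU}(2,1),\omega_{\mathrm{MC}})$; thus $N$ is locally $\mathrm{SU}(2,1)/P$, which in affine coordinates on $\CC^2$ is the hyperquadric $\Im(w)=\tfrac14 z\bar z$. (Alternatively, the whole statement is an instance of the equivalence of categories between non-degenerate $3$-dimensional CR structures and regular normal parabolic geometries of type $(\mathrm{SU}(2,1),P)$.)

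I expect the main obstacle to be the middle step: the bookkeeping of absorption and the ordering of the normalizations so that every reality condition stays consistent and the final matrix lands in $\mathfrak{su}(2,1)$. That is the one genuinely lengthy computation --- Cartan's original contribution --- whereas, granted that the prolongation closes up on $\mathfrak{su}(2,1)$, the identification as a Cartan geometry, the curvature normal form, and the flat case are formal.
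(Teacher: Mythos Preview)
Your outline is a sound sketch of the classical proof via Cartan's equivalence method (with the Tanaka/parabolic-geometry reformulation mentioned as an alternative), and there is no genuine gap in the strategy you describe. However, the paper does not actually prove this theorem: it is stated as a result of Cartan, with citations to \cite{Cartan-CR1,Cartan-CR2}, and the reader is referred to \cite{Jacobowitz,Bryant-CR,CS-Parabolic} for background. So there is no ``paper's own proof'' to compare against; the authors simply import the result and use the explicit form of $\phi$ and $\Phi$ as input for their subsequent computations (Proposition~\ref{prop:chains-3d-cr-necessary-cond} and Theorem~\ref{cor:3d-path-geometries-CR-chain-characterization}). Your sketch is essentially what one finds in those references, and in particular your remark that $S$ is a coframe derivative of $R$ (so $R=0\Rightarrow\Phi=0$) and the identification of the flat model with the hyperquadric are exactly the points the paper uses without further comment.
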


In some literature the structure function $R$ is referred to as the fundamental invariant since it is defined up to a scale on $N$ and its vanishing implies flatness. Another choice of fundamental invariant is given in Remark \ref{rmk:absolute-invariants-cr-para-cr} which is invariantly defined on $N.$  Note that the 3-dimensional hyperquadric in $\CC^2,$ locally characterized by $R=0,$  is a graph of the compact quadric $z_0\bar z_2-z_1\bar z_1+z_2\bar z_0=0$ in $\CC\PP^2$ where $[z_0\colon z_1\colon z_2]$ are homogeneous coordinates using which $\CC^2$ is identified as $[1\colon z_1\colon z_2]$. It can be shown, e.g. see  \cite[Section 2.2]{Jacobowitz}, that it is CR equivalent to the 3-sphere $\SSS^3\subset\CC^2$ given by $w\bar w+z\bar z=1$.

Cartan’s solution of the equivalence problem for 3-dimensional para-CR structures  is as follows,   in which, using Remark \ref{rmk:para-cr-2D-path}, we denote a 3D para-CR structure by $(N,\scD_1,\scD_2)$ for two line fields $\scD_1,\scD_2\subset TN$ such that $\scD_1\oplus\scD_2$ is the contact distribution.
\begin{theorem}[Cartan \cite{Cartan-Proj}]\label{thm:2D-path-geome}
 Given a 3D para-CR structure  $(N,\scD_1,\scD_2)$ it defines a Cartan geometry $(\pi\colon\cP \to N,\phi)$ of type $(\mathrm{SL}(3,\RR),P_{12})$ where $P_{12}$ is the subgroup of upper diagonal matrices and
  \begin{equation}
  \label{eq:2D-path-geom-cartan-conn}  
  \phi=
  \def\arraystretch{1.3}
\begin{pmatrix}    
 -\phi^i_i & \theta_2 &\theta_0\\
 \omega^1 &\phi^1_1&\theta_1\\
\omega^0&  \omega^2& \phi^2_2\\
\end{pmatrix}.
\end{equation}
in which  $\scD_1=\pi_*\{\omega^0,\omega^2\}^\perp, \scD_2=\pi_*\{\omega^0,\omega^1\}^\perp.$ The Cartan curvature is given by
     \begin{equation}
            \label{eq:path-geom-curv}
\Phi=\exd\phi+\phi\w\phi=  \begin{pmatrix}
    0 & P_1 \omega^0 \w\omega^1 & P_2\omega^0 \w\omega^1+Q_2\omega^0\w\omega^2\\
    0 & 0 & Q_1\omega^0\w\omega^2\\
0 & 0 & 0
  \end{pmatrix}
\end{equation}
for some functions $P_1,P_2,Q_1,Q_2$ on $\cP.$ The structure functions $P_1$ and $Q_1$ are defined up to scales on $N$ whose vanishing  implies $\Phi=0$ and that the para-CR structure is locally equivalent to the flat model on the projectivized tangent bundle $\PP(T\PP^2)$ with its flat path geometry defined by the lift of projective lines on $\PP^2,$ locally described by the scalar ODE $z''=0.$
 \end{theorem}
 More explicitly, since by Remark \ref{rmk:para-cr-2D-path} 3D para-CR geometries  correspond to point equivalence class of scalar 2nd order ODEs, given a scalar ODE
 \[z''=F(t,z,z'),\]
 one obtains
 \[P_1=\tfrac{\mathrm{D^2}}{\exd t^2}F_{z'z'}-4\tfrac{\mathrm{D}}{\exd t}F_{zz'}-F_{z'}\tfrac{\mathrm{D}}{\exd t}F_{z'z'}+4F_{z'}F_{zz'}-3F_zF_{z'z'}+6F_{zz},\quad Q_1=F_{z'z'z'z'}.\]

\begin{remark}\label{rmk:absolute-invariants-cr-para-cr}
The fundamental invariant of a  3D CR structure   can be represented as $\bR=s^*(R\bar R(\omega^0)^4)\in\Gamma((\scC^\perp)^4)$ for any section $s\colon N\to \cP.$  Unlike $R,$ which is well-defined up to a scale, $\bR$ is a well-defined absolute invariant and its vanishing implies flatness of the CR structure.  Similarly, in para-CR 3-manifolds $(N,\scD_1,\scD_2),$ one obtains that  $\bR=s^*(P_1Q_1(\omega^0)^4)$ is an absolute invariant. However, in para-CR 3-manifolds the vanishing of $\bR$ is necessary but not sufficient for flatness. More precisely, if $\bR=0,$ then either $P_1=0$ or $Q_1=0.$ If $Q_1=0,$ then $Q_2=0$ and the 2-dimensional leaf space $M=N\slash\scD_2$ is equipped with a projective structure on $M$ for which  the absolute invariant  $\bP=s^*(P_1\omega^1+P_2\omega^0)\otimes(s^*\omega^0\w\omega^1)$ for any section $s\colon M\to\cP,$ constitute the only fundamental invariant. Similarly, if $P_1=0,$ then $P_2=0$ and the 2-dimensional leaf space $T=N\slash\scD_1$ is equipped with a projective structure for which $\bQ=s^*(Q_1\omega^2+Q_2\omega^0)\otimes (s^*\omega^0\w\omega^2),$ for any section $s\colon T\to\cP,$ is the only fundamental invariant. Thus, flatness of a 3D para-CR structure is equivalent to the vanishing of the \emph{relative invariants} $P_1$ and $Q_1.$ Alternatively, it is equivalent to the vanishing of the absolute invariant $\bR,$ on $N,$ and the subsequent  vanishing of  $\bP$ and $\bQ$ on $M$ and $T,$ respectively.   
\end{remark}

Lastly we recall the notion of orientability for  path geometries, following \cite[Section 2]{Bryant-ProjFlat}. In dimension two a path geometry  $(N,\scD_1,\scD_2)$ is said to be oriented if the leaves of $\scD_1$ and $\scD_2$ can be endowed with a continuous choice of orientation. This is equivalent to the existence of two non-vanishing vector fields $v_i\in\Gamma(\scD_i)$ on $N.$  A  1-form $\alpha$ is called $\scD_i$-positive if its pull-back to each leaf of $\scD_i$ is positive with respect to a  pre-assigned orientation. As a result, by the naturally induced co-orientation on the contact distribution due to the relation $\exd\omega^0\equiv\omega^1\w\omega^2$ mod $\{\omega^0\},$  having  $\scD_i$-positive 1-forms  $\omega^1$ and $\omega^2$ determines an orientation on $N.$ Since by definition the fibers of $N\to M=N\slash\scD_2$ and $N\to T=N\slash\scD_1$ are  connected, it is straightforward to show that an oriented path geometry determines an orientation on the leaf spaces of $\scD_1$ and $\scD_2.$  Similarly, one can define oriented path geometries $(Q,\scX,\scV)$ in higher dimensions by assigning a continuous choice of orientation to the leaves of $\scX$ and $\scV$ which imply the existence of a nonvanishing vector field spanning $\scX$ and a nonvanishing section of  $\bigwedge^n(\scV).$ As in the case of surfaces, via the multi-contact structure, one obtains an orientation on the leaf space of the foliation induced by $\scV.$

\section{Chains}
\label{sec:chain-construction}
In this section we review known facts about chains and derive necessary conditions for a 3-dimensional path geometry to be  defined by chains. We describe such path geometries in terms of their corresponding point equivalence class of pairs of second order ODEs.  In the last two subsections we prove Theorem \ref{thm:3D-path-CR-para-CR}.

\subsection{Chains in dimension 3}
\label{sec:an-overview-chains}

In the description of a CR structure in terms of a Cartan geometry $(\cP \to N,\phi),$ as given in Theorem \ref{thm:CR-equiv-problem}, consider the grading of $\mathfrak{su}(2,1)$ that corresponds to the parabolic subgroup $P,$ which  is a \emph{contact grading,} i.e.
\[\mathfrak{su}(2,1)=\fg_{-2}\oplus\fg_{-1}\oplus\fg_{0}\oplus\fg_1\oplus\fg_2,\]
where, in terms of \eqref{eq:psi-3D-CR}, $\omega^0\in\fg_{-2},$  $\omega,\bar\omega\in\fg_{-1},$ $\phi_1,\phi_2\in\fg_0,$ $\theta,\bar\theta\in\fg_1,$ and $\theta_0\in\fg_2.$  Now we can define chains as follows. 
\begin{definition}\label{def:chains-3d-cr}
Let $(N,\scC,J)$ be a 3-dimensional CR structure with Cartan geometric data $(\cP\to N,\phi).$   Taking  any element $X\in\fg_{-2},$ a chain on $N$ is a curve that is the projection of an integral curve of the vector field $\phi^{-1}(X)\in \Gamma(T\cP)$ via the projection $\tau\colon\cP\to N.$  
\end{definition}
In \cite{CZ-CR} the path geometry of  CR chains was studied using this Cartan geometric description, which we recall briefly in the case of dimension three. Firstly, one starts by describing the set of transverse directions to the contact distribution. Let $S\subset P$ be the stabilizer of the line $l\subset\mathfrak{su}(2,1)$  spanned by an element of $\fg_{-2}.$ It follows that  $\fs=\fg_{0}\oplus\fg_2\subset \fp=\fg_0\oplus\fg_1\oplus\fg_2$ where $\fs$ and $\fp$ are the  Lie algebras of $S$ and $P,$ respectively.  Moreover, it is straightforward to see that the  $P$-orbit of $l,$ denoted as $P\cdot l\subset\mathfrak{su}(2,1)\slash\fp,$ is the set of all lines in $\mathfrak{su}(2,1)\slash\fp$ that are not contained in $\fg^{-1}\slash\fp$ where $\fg^{-1}:=\fg_{-1}\oplus\fg_0\oplus\fg_1\oplus\fg_2$ and $\cP\times_P\fg^{-1}\slash\fp$ is the  contact distribution $\scC\subset TN.$ Thus, the bundle of directions that are transverse to the contact distribution, i.e.  $Q:=\PP(TN\backslash\scC),$ can be identified with $\cP\slash S:=\cP\times_P
P\slash S.$ Viewing the Cartan connection $\phi\in\Omega^1(\cP,\mathfrak{su}(2,1))$ as a connection on $Q,$ it gives 
\[TQ\cong\cP\times_S (\mathfrak{su}(2,1)\slash \fs).\] 
Using the above identification, it follows that the induced path geometry  $(Q,\scX,\scV)$ is give by $\scX=\nu_*(\phi^{-1}(\fg_{-2}))$ and $\scV=\nu_*(\phi^{-1}(\fg_{1}))$ where $\nu\colon\cP\to Q$ is the projection.

The relation between CR structures and the path geometry of their chains is studied via the so-called \emph{extension functor} in \cite{CZ-CR}. In the following proposition we state some necessary conditions for the path geometry of chains via an appropriate reduction, avoiding the notion of an extension functor.

\begin{proposition}\label{prop:chains-3d-cr-necessary-cond}
  Given a CR structure $(N,\scC,J)$ with corresponding Cartan geometry $(\cP\to N,\phi),$ let $(Q,\scX,\scV)$ be the path geometry of its chains with corresponding Cartan geometry $(\cG\to Q,\psi).$ Then, via the natural bundle map $\iota\colon\cP\to\cG,$ one has
  \begin{equation}
    \label{eq:CR-Chain-connection-pull-back}
    \def\arraystretch{1.3}
\iota^*\psi=
    \begin{pmatrix}
      -\phi_0 & -\theta_0 & -\theta_2 &  \theta_1\\
      \omega^0 & \phi_0 & \omega^2 & -\omega^1\\
      \omega^1 & \theta_1 & 0 & \phi_1 \\
      \omega^2 & \theta_2 & -\phi_1  & 0
    \end{pmatrix},
    \end{equation} 
  where $\phi$ is given as \eqref{eq:psi-3D-CR}. 
 Moreover, it follows that the fundamental invariants restricted to $\cP$ are given by 
  \begin{equation}
    \label{eq:TC-CR-to-3D}
    \bT_\cP=\left(-R_1(\theta_1)^2+R_1(\theta_2)^2+2R_2\theta_1\theta_2\right)\otimes V\otimes X^{-2},\quad \bC_{\cP}=3\left((\theta_1)^2+(\theta_2)^2\right)^2\otimes V\otimes X^{-1}
    \end{equation}
  where $\bT_{\cP}$ and $\bC_{\cP}$ are defined as \eqref{eq:quadric-quartic} using sections  $\iota\circ s\colon Q\to\cG$ and $s\colon Q\to\cP.$ Moreover the 2-form
\begin{equation}
  \label{eq:quasi-symp-CR-chains}
  \rho=s^*(\omega^1\w\theta_1+\omega^2\w\theta_2)\in\Omega^2(Q)
\end{equation}
  is well-defined and closed. The characteristic curves of $\rho$ coincide with chains of the CR structure. 
\end{proposition}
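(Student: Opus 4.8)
The plan is to realize the path geometry of chains as a Fefferman-type extension of the CR Cartan geometry, pin down its normal Cartan connection by a uniqueness argument, and then extract the invariants and the $2$-form $\rho$ from the resulting structure equations. Recall from \cite{CZ-CR} that $Q=\cP\slash S$ with $\fs=\fg_0\oplus\fg_2$, so that the eight $1$-forms $\omega^0,\omega^1,\omega^2,\theta_1,\theta_2,\phi_0,\phi_1,\theta_0$ of \eqref{eq:psi-3D-CR} form a coframe on $\cP$ whose last three span the annihilator of the vertical bundle $\phi^{-1}(\fs)$ of $\nu\colon\cP\to Q$, while $\scX=\nu_*(\phi^{-1}(\fg_{-2}))$ and $\scV=\nu_*(\phi^{-1}(\fg_{1}))$. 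First I would write $\hat\psi$ for the $\mathfrak{sl}(4,\RR)$-valued $1$-form on $\cP$ given by the matrix in \eqref{eq:CR-Chain-connection-pull-back} and check the cheap structural facts: $\hat\psi$ is trace-free and pointwise injective (the above coframe appears among its entries); its $\fg_0\oplus\fg_2$ part reproduces the fundamental vector fields of the induced $S$-action under the embedding $S\hookrightarrow P_{12}$ read off from \eqref{eq:CR-Chain-connection-pull-back}; and $\hat\psi$ modulo $\mathfrak{p}_{12}$ identifies the soldering forms $\alpha^0,\alpha^1,\alpha^2,\beta^1,\beta^2$ with $\omega^0,\omega^1,\omega^2,\theta_1,\theta_2$, i.e.\ recovers $\scX$ and $\scV$ as the distributions of the path geometry of chains.

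Next I would extend $\hat\psi$ $P_{12}$-equivariantly to a Cartan connection $\psi$ on $\cG:=\cP\times_S P_{12}\to Q$ (the extension functor of \cite{CZ-CR}, phrased without the categorical language) and compute its curvature by pulling back along $\iota\colon\cP\to\cG$, i.e.\ by differentiating $\hat\psi$ entry by entry and substituting the CR structure equations $\exd\phi+\phi\w\phi=\Phi$ with $\Phi$ as in \eqref{eq:CartanCurv2Form}. The point is to verify that $\exd\hat\psi+\hat\psi\w\hat\psi$ is semi-basic for $\nu$, has vanishing first column, and -- crucially -- satisfies the normalization conditions $T^i_i=0$, $C^i_{jkl}=C^i_{(jkl)}$, $C^i_{ijk}=0$ of \eqref{eq:fels-invariants-path}. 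I expect this normality check to be the main obstacle: it is a finite but lengthy computation, it is what forces the precise coefficients and signs in \eqref{eq:CR-Chain-connection-pull-back} (equivalently, it fixes the curvature correction one must add to the naive extension), and it is the only step in which the CR curvature functions $R,S$ actually enter. Once it is in place, $\psi$ is a regular normal Cartan connection of type $(\mathrm{PSL}(4,\RR),P_{12})$ inducing $(\scX,\scV)$, hence by the uniqueness in Theorem \ref{thm:path-geom-cartan-conn} it is the canonical Cartan connection of the path geometry of chains; in particular $\iota^*\psi=\hat\psi$, which is \eqref{eq:CR-Chain-connection-pull-back}.

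From the curvature matrix just computed one reads off the blocks $B^i$ and $\Psi^i_j$ of \eqref{eq:path-geom-cartan-conn-3D}, hence the components $T^i_j$ and $C^i_{jkl}$; translating through Proposition \ref{prop:3D-path-geom} (so that $A_0=T^2_1$, $A_1=T^2_2$, $A_2=-T^1_2$ and $W_0=C^2_{111},\dots,W_4=-C^1_{222}$) yields the binary quadric and quartic. The computation shows that the $\fg_2$-curvature $S$ drops out, that $\bT_\cP$ is exactly the quadric built from $R=R_1+\i R_2$ displayed in \eqref{eq:TC-CR-to-3D}, and that $\bC_\cP$ is the curvature-independent perfect square $3\bigl((\theta_1)^2+(\theta_2)^2\bigr)^2\otimes V\otimes X^{-1}$, which exhibits the conjugate pair of double roots characteristic of CR chains.

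Finally, for the $2$-form I would use the complex packaging $\omega^1\w\theta_1+\omega^2\w\theta_2=\mathrm{Re}(\bar\omega\w\theta)$. Each of $\omega^1,\omega^2,\theta_1,\theta_2$ annihilates $\phi^{-1}(\fs)$, so this $2$-form is semi-basic for $\nu$; $S$-invariance follows either from a weight count on $\fg_{-1}\otimes\fg_{1}$ or, via Cartan's formula, from the observation -- again a structure-equation computation -- that $\exd(\omega^1\w\theta_1+\omega^2\w\theta_2)$ is itself semi-basic, so the form descends to a well-defined $\rho\in\Omega^2(Q)$. Closedness, $\exd\rho=0$, then follows from the same structure equations, the curvature contributions cancelling consistently with the normality established above. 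Since $\rho\w\rho=2\,\omega^1\w\theta_1\w\omega^2\w\theta_2\neq0$, the form $\rho$ has maximal rank $4$ on the $5$-manifold $Q$, so $\ker\rho$ is a line field; because $\alpha^0=\omega^0$ does not occur in $\rho$ it contains $\scX=\langle\partial_{\alpha^0}\rangle$, whence $\ker\rho=\scX$. Therefore the characteristic curves of $\rho$ are the integral curves of $\scX$, which by the construction of the chain path geometry project under $Q\to N$ onto the chains of $(N,\scC,J)$.
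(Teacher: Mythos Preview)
Your proposal is correct and follows essentially the same route as the paper: write down the candidate $\mathfrak{sl}(4,\RR)$-valued form on $\cP$, verify (via the CR structure equations and the Cartan curvature \eqref{eq:CartanCurv2Form}) that it satisfies the normalization conditions of Theorem~\ref{thm:path-geom-cartan-conn}, and then read off $\bT$, $\bC$, and the properties of $\rho$. The paper's proof compresses all of this into ``it is a matter of elementary computation'' and ``straightforward calculation,'' whereas you spell out the logical skeleton---in particular the role of normality plus uniqueness in pinning down $\iota^*\psi$, and the semi-basicity/invariance argument for $\rho$---which is exactly what is needed to make those computations meaningful; the paper phrases the construction as ``an appropriate reduction, avoiding the notion of an extension functor,'' while you name the extension functor explicitly, but the underlying mechanism is identical.
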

\begin{proof}
Using the fact that $\scX=\nu_*(\phi^{-1}(\fg_{-2}))$ and $\scV=\nu_*(\phi^{-1}(\fg_{1})),$ and the form of the Cartan connection $\phi$ in \eqref{eq:psi-3D-CR}, it is a matter of straightforward computation to show $\iota^*\psi$  and $\phi$ are related as in \eqref{eq:CR-Chain-connection-pull-back}. Using the Cartan curvature $\Phi$ in \eqref{eq:CartanCurv2Form} and the definition of $\bT$ and $\bC,$ it is a matter of straightforward  to obtain \eqref{eq:TC-CR-to-3D}. Lastly, it is elementary to show that $\rho$ is invariant under the action of the fibers $\cP\to Q$ and is closed using the structure equations \eqref{eq:psi-3D-CR}. The characteristic curves of $\rho$ are defined as the integral curves of the line field $l\subset TQ$ satisfying $l\im\rho=0.$ Using the expression \eqref{eq:quasi-symp-CR-chains}, it follows that $l=\langle\tfrac{\partial}{\partial t^*\omega^0}\rangle,$ for any section $t\colon Q\to \cP,$ which implies that    $l$ is the tangent direction to chains.
\end{proof}

Now we consider 3D para-CR geometries. By Remark \ref{rmk:para-cr-2D-path}, they are  defined in terms of a contact 3-manifold with the property that the contact distribution has a splitting. 
Similar to the case of CR structures, para-CR structures are equipped with a set of distinguished curves called chains.

To define chains in this case,  recall that the Lie algebra $\mathfrak{sl}(3,\RR)$ in which, by Theorem \ref{thm:2D-path-geome}, the Cartan connection  $\phi$ of a 3D para-CR geometry takes value has a contact grading
\[\mathfrak{sl}(3,\RR)=\fg_{-2}\oplus\fg_{-1}\oplus\fg_{0}\oplus\fg_{1}\oplus\fg_{2},\]
where $\fg_{-2},\fg_{2}$ have rank 1 and  $\fg_{-1},\fg_{1},\fg_0$ have rank  2. 

As a result the family of chains can be defined exactly as in Definition \ref{def:chains-3d-cr}. Similarly, it can be shown  that  chains of a 3D para-CR geometry $(N,\scD_1,\scD_2)$ define a 3-dimensional path geometry $(Q,\scX,\scV)$ where $N=Q\slash\scV.$ The paths of the induced path geometry on $N$ are defined for all directions that  are transverse to the contact distribution $\scC=\scD_1\oplus\scD_2\subset TN.$  In other words, the 5-manifold  $Q=\PP(TN\backslash\scC)$ is foliated by the natural lift of chains. Now we state a proposition analogous to Proposition \ref{prop:chains-3d-cr-necessary-cond} which gives  necessary conditions for a 3D path geometry to arise from chains of a para-CR geometry via an appropriate reduction. We skip the proof as it is as straightforward as that of Proposition \ref{prop:chains-3d-cr-necessary-cond}.
\begin{proposition}\label{prop:chains-2D-path-necessary-cond}
  Given a 3D para-CR geometry $(N,\scD_1,\scD_2)$ with corresponding Cartan geometry $(\cP\to N,\phi),$ let $(Q,\scX,\scV)$ be the path geometry of its chains with corresponding Cartan geometry $(\cG\to Q,\psi).$ Then, via the natural bundle map $\iota\colon\cP\to\cG,$ one has
  \begin{equation}
    \label{eq:2D-path-Chain-connection-pull-back}
    \def\arraystretch{1.3}
\iota^*\psi=
    \begin{pmatrix}
      -\phi^2_2-\half \phi^1_1 & \theta_0 & \half \theta_2 & \half \theta_1\\
      \omega^0 & \phi^2_2+\half\phi^1_1 & \half\omega^2 & -\half\omega^1\\
      \omega^1 & \theta_1 & \tfrac{3}{2}\phi^1_1 &  0\\
      \omega^2 & -\theta_2 & 0 & -\tfrac{3}{2}\phi^1_1
    \end{pmatrix},
    \end{equation}
  where $\phi$ is given as \eqref{eq:2D-path-geom-cartan-conn}. 
 Moreover, it follows that the fundamental invariants $\bT$ and $\bC$ restricted to $\cP$ are given by 
  \begin{equation}
    \label{eq:TC-3D-to-2D}
    \bT_\cP=\left(P_1(\theta_1)^2+Q_1(\theta_2)^2\right)\otimes V\otimes X^{-2},\quad \bC_{\cP}=6(\theta_1)^2(\theta_2)^2\otimes V\otimes X^{-1}
    \end{equation}
  where $\bT_{\cP}$ and $\bC_{\cP}$ are defined as \eqref{eq:quadric-quartic} using  sections  $\iota\circ s\colon Q\to\cG$ and  $s\colon Q\to\cP.$ Moreover the 2-form
\begin{equation}
  \label{eq:quasi-symp-chains}
  \rho=s^*(\omega^1\w\theta_2-\omega^2\w\theta_1)\in\Omega^2(Q)
\end{equation}
  is well-defined and closed. The characteristic curves of $\rho$ coincide with chains of the CR structure. 
\end{proposition}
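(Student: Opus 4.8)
The plan is to imitate the proof of Proposition~\ref{prop:chains-3d-cr-necessary-cond}, replacing $\mathfrak{su}(2,1)$ and its CR contact grading by $\mathfrak{sl}(3,\RR)$ and the contact grading associated to $P_{12}$. First I would identify the bundle of transverse directions exactly as in the CR case: $Q=\PP(TN\setminus\scC)$ is $\cP/S$, where $S\subset P_{12}$ is the stabilizer of the line in $\mathfrak{sl}(3,\RR)/\fp$ spanned by a nonzero element of $\fg_{-2}$; since $[\fg_0,\fg_{-2}]\subseteq\fg_{-2}$ and $[\fg_2,\fg_{-2}]\subseteq\fg_0\subseteq\fp$ while $[\fg_1,\fg_{-2}]\subseteq\fg_{-1}$, one gets $\fs=\fg_0\oplus\fg_2$, so the fibres of $\nu\colon\cP\to Q$ are generated by $\phi^{-1}(\theta_0),\phi^{-1}(\phi^1_1),\phi^{-1}(\phi^2_2)$ and $(\omega^0,\omega^1,\omega^2,\theta_1,\theta_2)$ is semibasic for $\nu$. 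The chain path geometry is $\scX=\nu_*\phi^{-1}(\fg_{-2})$, $\scV=\nu_*\phi^{-1}(\fg_1)$, and there is a natural bundle map $\iota\colon\cP\to\cG$ over $Q$ coming from the inclusion $S\hookrightarrow P_{12}\subset\mathrm{PSL}(4,\RR)$.

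To establish \eqref{eq:2D-path-Chain-connection-pull-back} I would set $\alpha^0:=\omega^0$, $(\alpha^1,\alpha^2):=(\omega^1,\omega^2)$, $(\beta^1,\beta^2):=(\theta_1,-\theta_2)$, which fixes the semibasic entries of $\iota^*\psi$; the remaining entries are then forced by two requirements—that $\iota^*\psi$ be $\mathfrak{sl}(4,\RR)$-valued, giving the diagonal and trace relations, and that its curvature $\iota^*\Psi=\exd(\iota^*\psi)+\iota^*\psi\wedge\iota^*\psi$ be semibasic for $\nu$ and have the normalized shape of Theorem~\ref{thm:path-geom-cartan-conn} (trace-free torsion $B^i$, totally trace-free symmetric $\Psi^i_j$, prescribed $M_0,M_j,V_j$). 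Substituting the structure equations $\exd\phi=\Phi-\phi\wedge\phi$ from \eqref{eq:2D-path-geom-cartan-conn}--\eqref{eq:path-geom-curv} and solving the resulting linear conditions on the $\theta$- and $\phi$-entries pins down the coefficients $\tfrac12,\tfrac32,\dots$ in \eqref{eq:2D-path-Chain-connection-pull-back}; by uniqueness of the normal Cartan connection this is the pull-back of the canonical connection of the chain path geometry.

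With $\iota^*\psi$ in hand, the invariants follow from a block-by-block expansion of $\iota^*\Psi$: the $B^i$-block yields $T^i_j$ and the $\Psi^i_j$-block yields $C^i_{jkl}$, which I substitute into \eqref{eq:quadric-quartic}. Since $T_2$ and $C_2$ enter $\Phi$ only through $\Phi_{02}=T_2\omega^0\wedge\omega^1+C_2\omega^0\wedge\omega^2$, which feeds the $M_j,V_j$-blocks and not $B^i$ or $\Psi^i_j$, the expressions collapse to $\bT_\cP=(T_1(\theta_1)^2+C_1(\theta_2)^2)\otimes V\otimes X^{-2}$ and $\bC_\cP=6(\theta_1)^2(\theta_2)^2\otimes V\otimes X^{-1}$; the latter being a product of two distinct real squares is precisely the algebraic type that drives Corollary~\ref{cor:3d-path-geometries-chain-characterization}. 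For the $2$-form \eqref{eq:quasi-symp-chains}, semibasicness for $\nu$ is clear since $\omega^1,\omega^2,\theta_1,\theta_2$ all annihilate $\phi^{-1}(\fg_0\oplus\fg_2)$, and $S$-invariance follows from the equivariance $\mathcal{L}_{\phi^{-1}(E)}\phi=-[E,\phi]$ together with the grading brackets—the $\fg_0$-action rescales the $\fg_{\pm1}$-entries by opposite weights, and the $\fg_2$-action shifts the $\theta_a$-entries into the $\omega^a$-entries in just the pattern under which this combination is fixed—so it descends to $\rho\in\Omega^2(Q)$. Closedness is then a direct computation: expanding $\exd\rho$ with the structure equations, the connection-form terms cancel pairwise and the curvature terms drop out after wedging, giving $\exd\rho=0$. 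Finally $\rho$ has rank $4$ on $Q^5$ (its matrix in $\omega^1,\omega^2,\theta_1,\theta_2$ is invertible), so $\ker\rho$ is a line field, and since $\tfrac{\partial}{\partial\omega^0}$—spanning the lift of $\scX$—is annihilated by every $\omega^i\wedge\theta_j$, we get $\ker\rho=\scX$; hence the characteristic curves of $\rho$ are the chains.

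The step requiring the most care is the second one: the naive extension of $\phi$ to the enlarged bundle need not be a normal Cartan connection, so one must check that the particular combination recorded in \eqref{eq:2D-path-Chain-connection-pull-back} produces a curvature of the normalized type of Theorem~\ref{thm:path-geom-cartan-conn}. This is exactly the content of the extension-functor construction of \cite{CZ-CR} made explicit; once it is in place, the invariants, the well-definedness and closedness of $\rho$, and the identification of its characteristic curves are routine differential-form bookkeeping, entirely parallel to the CR case.
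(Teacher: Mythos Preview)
Your proposal is correct and follows exactly the approach the paper takes: the paper simply says ``We skip the proof as it is similar to that of Proposition~\ref{prop:chains-3d-cr-necessary-cond},'' and you have carried out precisely that imitation, replacing $\mathfrak{su}(2,1)$ by $\mathfrak{sl}(3,\RR)$ with its $P_{12}$ contact grading and working through the elementary computations the paper alludes to. Your identification of $\fs=\fg_0\oplus\fg_2$, the choice of semibasic entries $(\alpha^0,\alpha^1,\alpha^2,\beta^1,\beta^2)=(\omega^0,\omega^1,\omega^2,\theta_1,-\theta_2)$, and the subsequent curvature and $\rho$ computations all match what the paper intends; your closing caveat about normality of the extended connection is exactly the subtlety the paper handles via the extension functor of \cite{CZ-CR}.
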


By Proposition \ref{prop:chains-3d-cr-necessary-cond} and Proposition \ref{prop:chains-2D-path-necessary-cond}  if a 3D path geometry arises from chains of a para-CR or  CR structure, then there is a distinguished coframe in which the torsion and curvature can be put in the form \eqref{eq:TC-3D-to-2D} and \eqref{eq:TC-CR-to-3D}, respectively.
\begin{remark}\label{rmk:R-T-discriminant}
The torsion $\bT_\cP,$ as expressed  in Proposition \ref{prop:chains-3d-cr-necessary-cond} and \ref{prop:chains-2D-path-necessary-cond}, can be related to the absolute invariant $\bR,$ as defined in Remark \ref{rmk:absolute-invariants-cr-para-cr}, via $\bR=s^*(\Delta_{\bT_\cP}(\omega^0)^4)$ for any section $s\colon N\to \cP,$ where $\Delta_{\bT_\cP}$ is the discriminant of the quadratic polynomial given by $\bT_\cP.$
\end{remark}
\begin{remark}
In the language of \cite{MS-cone}, the 2-form  $\rho$ in Proposition \ref{prop:chains-3d-cr-necessary-cond} and Proposition \ref{prop:chains-2D-path-necessary-cond}  induces a  \emph{compatible quasi-symplectic structure} on $Q$, i.e. $\rho\w\rho\neq 0$ and $\exd\rho=0$ with the property that the fibers of $Q\to N$ are isotropic and the tangent directions to the paths, $\langle\tfrac{\partial}{\partial\omega^0}\rangle,$ coincide with its characteristic direction. 
 
\end{remark}
 
\subsection{Corresponding pairs of 2nd order ODEs}
\label{sec:corr-pair-2nd}

In this section we use the fact that chains are the characteristics of the 2-form $\rho$ in Proposition \ref{prop:chains-3d-cr-necessary-cond} and Proposition \ref{prop:chains-2D-path-necessary-cond} to associate a pair of second order ODEs to their corresponding 3D path geometry. Using the derived ODEs, we give an explicit description of the induced geometric structure on the space of chains for flat (para-)CR structures. 

\subsubsection{Derivation of ODEs}
\label{sec:chains-pairs-odes}

By Remark \ref{rmk:para-cr-2D-path}, a 3D para-CR geometry is locally given by the point equivalence class of a scalar ODE $y''=F(z,y,y').$ As was discussed in \ref{sec:path-geom-defin}, the hypersurface $\cE\subset J^2(\RR,\RR)$ defined by the ODE, can be identified with $J^1(\RR,\RR).$ The contact distribution of $J^1(\RR,\RR)$ thus inherits a splitting $\scC=\scD_1\oplus\scD_2$ where $\scD_2$ is the vertical tangent bundle to $J^1(\RR,\RR)\to J^0(\RR,\RR)$ and $\scD_1$ is the tangent direction to the solution curves of the ODE. Let $(x,y)$ and $(x,y,p)$ be local coordinates for $J^0(\RR,\RR)$ and $J^1(\RR,\RR),$ respectively. A choice of adapted coframe on $N$ for the path geometry is given by 
\[s^*\omega^0=\exd y-p\exd x,\quad s^*\omega^1=\exd x,\quad s^*\omega^2=\exd p-F(x,y,p)\exd x,\]
where $s\colon J^1(\RR,\RR)\to \cP$ is a section of the principal bundle of the 3D para-CR geometry. It is a matter of standard calculation, e.g. see \cite{Gardner-Book}, to find all the entries of $s^*\phi.$ In particular, one obtains
\begin{equation}
  \label{eq:theta12-lift-2D-path-parametric}
    \begin{aligned}    
      s^*\theta_1=&\tfrac 16F_{ppp}(\exd y-p\exd x),\\
      s^*\theta_2=&\left(\tfrac 16 \tfrac{\rD}{\exd x}(pF_{pp})-\tfrac 23\tfrac{\rD}{\exd x}F_p +\tfrac 23 F_{xp}+F_y\right)\exd x+(\tfrac 23 F_{yp}-\tfrac 16\tfrac{\rD}{\exd x}F_{pp})\exd y+\tfrac 12 F_{pp}\exd p.
    \end{aligned}
\end{equation}
where $\tfrac{\rD}{\exd x}=\partial_x+p\partial_y+F\partial_p$ is the total derivative  \eqref{eq:TotalDerivative}. To introduce a coordinate system on $Q:=\PP(TN\backslash\scC),$ which we identified with $\cP\slash S$ in \ref{sec:an-overview-chains}, we shall first parametrize the fibers of $\cP\to N$ which are the upper triangular matrices $P_{12}\subset\mathrm{SL}(3,\RR).$ Writing $P_{12}=P_0\ltimes P_+$ where $P_0$ is the reductive subgroup, referred to as the structure group, and $P_+$ is the nilpotent normal subgroup, one obtains
\[P_0=\left\{A\in\mathrm{SL}(3,\RR)\,\vline\, A=
    \begin{pmatrix}
      \tfrac{1}{a_1a_2} & 0 & 0\\
      0 & a_1 & 0\\
      0 & 0 & a_2
    \end{pmatrix}
\right\},\ P_+=\left\{B\in \mathrm{SL}(3,\RR)\,\vline\, B=
  \begin{pmatrix}
    1 & b_2 & \half b_1b_2+b_0\\
    0 & 1 & b_1\\
    0 & 0 & 1
  \end{pmatrix}
\right\}\]
Thus, the variables $(x,y,p,a_1,a_2,b_0,b_1,b_2)$ give a local coordinate system for $\cP.$ By the discription of $S$ and that $Q\cong\cP\slash S,$ it follows that $Q$ can be identified with the slice $a_1=a_2=1$ and $b_0=0.$

Now we lift the adapted coframe on $N$ to $Q.$ In order to do so, we recall the transformation of the Cartan connection along the fibers of $\cP\to N$ to be
\begin{equation}
  \label{eq:gauge-transformation-2D-path}
\phi(z)\to \phi(r_gz)=g^{-1}\phi(z)g+g^{-1}\exd g,
\end{equation}
where $z\in\cP$ and $g\in P_{12}.$ Since $Q$  is identified with $a_1=a_2=1,b_0=0,$ then $g$ can be written as $g= B$ where  $B\in P_+$ as parametrized above.
Using the section $t\colon Q\to \cP$ given by $a_1=a_2=0,b_0=0,$ and the expressions of $s^*\theta_1$ and $s^*\theta_2,$  an adapted  coframe on $Q$ is given by
\[
  \begin{aligned}
    s^*\omega^0 &\to t^*\omega^0=\exd y-p\exd x,\\
    s^*\omega^1 & \to t^*\omega^1=\exd x-b_1(\exd y-p\exd x),\\
    s^*\omega^2 & \to t^*\omega^2=\exd p -F(x,y,p)\exd x-b_2(\exd y-p\exd x),\\
    s^*\theta_1 & \to t^*\theta_1=\exd b_1+z_{11}\exd x+z_{12}\exd y+z_{13}\exd p, \\
    s^*\theta_2 & \to t^*\theta_2=\exd b_2+z_{21}\exd x+z_{22}\exd y +z_{23}\exd p,
  \end{aligned}\]
for functions $z_{ij}$ on $Q$  determined via \eqref{eq:gauge-transformation-2D-path}. 

Now we can explicitly find the quasi-symplectic 2-form $\rho$ on $Q$ to be
\begin{equation}
  \label{eq:rho-2D-path}
  \begin{aligned}
    \rho&=t^*(\omega^1\w\theta_2-\omega^2\w\theta_1)\\
    &= -\exd p \w \exd b_1-\tfrac 16F_{ppp}\exd p \w \exd y+(b_2 b_1+\half F_{pp}+b_1F_{p}-\tfrac 16 pF_{ppp} ) \exd x\w\exd p+(b_1 p+1) \exd x \w\exd b_2\\
    &+ (p b_2+F) \exd x \w \exd b_1+(-\tfrac 16 F_{xpp}+\tfrac 23 F_{yp}+b_1 F_y-\tfrac 16 p F_{ypp}) \exd x \w \exd y-b_1 \exd y \w \exd b_2-b_2 \exd y \w \exd b_1
  \end{aligned}
\end{equation}
It is straightforward to find a characteristic vector field for $\rho,$ i.e. $X\in\Gamma(TQ)$ such that  $X\im\rho=0.$ It follows that $X=\langle\partial_x+\tfrac{pb_1+1}{b_1}\partial_y+\tfrac{Fb_1-b_2}{b_1}\partial_p+B_1\partial_{b_1}+B_2\partial_{b_2}\rangle$ for two functions $B_1$ and $B_2.$ In order to get a pair of ODEs, we use coordinate manipulation to make $X$ look like a total derivative  \eqref{eq:TotalDerivative}. We take $x$ to be the independent variable of the pair of ODEs and introduce the following new variables
\begin{equation}
  \label{eq:coordinate-change-2Dpath-chains}
  Y=\tfrac{pb_1+1}{b_1},\quad P=\tfrac{Fb_1-b_2}{b_1}.
  \end{equation}
Expressing $\rho$ in the new coordinate system $(x,y,p,Y,P)$ to find a characteristic vector field $X,$ one obtains $X=\langle\partial_x+Y\partial_y+P\partial_p+G_1\partial_Y+G_2\partial_P\rangle$ for two functions $G_1,G_2$ on $Q.$ Replacing $Y$ with $y'$ and $P$ with $p'$ in $G_1$ and $G_2,$ the  pair of second order ODEs that corresponds to the characteristic curves of $\rho$ is given by 
\begin{equation}
  \label{eq:chain-pair-ODEs}
      \begin{aligned}
      y'' =& F + F_p\Delta+\half F_{pp}\Delta^2 +\tfrac{1}{6} F_{ppp}\Delta^3\\
      p''=&\tfrac{2}{\Delta}(p'-F)^2+F_p(3p' - 2F ) + F_x + pF_y + \left(F_{pp}(p' - F ) + 2F_y\right)\Delta\\
      & +\tfrac{1}{6}\left(F_{ppp} (p' - 2F) - F_{xpp} + 4F_{yp} - pF_{ypp})\right)\Delta^2
    \end{aligned}
  \end{equation}
  where   $\Delta=y'-p.$ In particular, the pair of second order ODEs for the chains of the flat 3D para-CR geometry, which corresponds to $F=0$ in \eqref{eq:chain-pair-ODEs}, is given by
  \begin{equation}
    \label{eq:chains-flat-2D-path}    
    y''=0,\quad p''=\frac{2(p')^2}{p-y'}.
      \end{equation}
  The pair of ODEs  \eqref{eq:chain-pair-ODEs}  was obtained in \cite{BW-chains} via a different approach. It is known that chains of a 3D para-CR geometry can be defined as the projection of null geodesics of the corresponding \emph{Fefferman conformal structure} which has signature (2,2). In \cite{BW-chains}  the authors  used the lifted coframe to derive the equation of null geodesics for the correponding Fefferman conformal structure in order to parametrize the chains.     The reason that the pair of ODEs obtained from either approaches agree is due to that fact that a (para-)CR structure is, in particular, a symmetry reduction of its corresponding Fefferman conformal structure by a null conformal Killing field. As is shown in \cite{MS-cone}, in a symmetry reduction the null geodesics of the conformal structure define a \emph{variational orthopath geometry} on the leaf space of the infinitesimal symmetry. The  paths of a variational orthopath structure are characteristic curves of a quasi-symplectic structure. In our case the leaf space is a (para-)CR manifold and the paths of the orthopath geometry are the chains since they are the projection of the null geodesics of a Fefferman conformal structure. Thus, to express the system of ODE for chains one can either express null geodesics and project them or directly compute the characteristic curves of the orthopath geometry of chains.

\begin{remark}\label{rmk:sl3-submaximal-3rd-order-ODE-o22}
 Instead of $x,$ one can take $y$ or $p$ as the independent variable when deriving  a pair of ODEs that corresponds to chains. This would result in a pair of ODEs that is point equivalent to \eqref{eq:chain-pair-ODEs}.   
Note that the pair of ODEs \eqref{eq:chains-flat-2D-path}  has 8-dimensional algebra of infinitesimal symmetries which is isomorphic to $\mathfrak{sl}(3,\RR)$. It can be  solved explicitly. Using the second ODE, $y'$ can be solved algebraically. Substituting  in the second ODE, one obtains the 3rd order ODE
\begin{equation}
  \label{eq:submax-3rd-order-ODE-1}
  p'''=\frac{3(p'')^2}{2p'},
  \end{equation} 
which is one of the two  submaximal 3rd order ODEs under point transformations \cite[Section 4.2]{Godlinski}. This third order ODE is a re-expression of the vanishing of the Schwarzian derivative of $p(x).$ Its algebra of infinitesimal symmetries is  $\mathfrak{o}(2,2)$  and has vanishing \emph{W\"unschmann invariant} and \emph{Cartan invariant}  which implies that it induces a non-flat \emph{Einstein-Weyl structure} on its solution space (see \cite{Tod} for a discussion on Einstein-Weyl geometry and \cite{Wojtek} for the variational properties of \eqref{eq:submax-3rd-order-ODE-1}.) Since the Weyl 1-form is closed, such Einstein-Weyl structures locally define 3-dimensional Lorentzian metrics up to homothety. In fact, they are the homothety class of the Lorentzian metric of negative sectional curvature on  $\mathrm{SO}(2,2)\slash\mathrm{SO}(2,1)\cong\HH^{2,1}.$ Lastly, we point out that the relation above between a pair of second order ODEs arising as chains and a scalar third order ODE remains valid if the initial 3D para-CR geometry is defined by an ODE of the form $y''=F(x,y')$ i.e. if the 3D para-CR geometry has an infinitesimal symmetry. Note that any infinitesimal symmetry of a CR structure is almost everywhere transverse to the contact distribution. More precisely, if $F=F(x,y')$ then using the second ODE  in \eqref{eq:chain-pair-ODEs} one can find $y'$ as a function of  $x,p,p',p''.$ Subsequently, replacing in the first ODE in \eqref{eq:chain-pair-ODEs} one obtains a third order ODE in $p(x).$ This relation between a pair of second order ODEs and a scalar third order ODE, also appears in \cite[Section 6]{DW} using  constructions that are in general different from chains unless the 3D para-CR geometry is flat.   Unlike the construction in \cite[Section 6]{DW}, we do not expect that 3rd order ODEs obtained in this fashion from pairs of ODEs defined by the chains of a non-flat scalar ODE $y''=F(x,y')$ to correspond to an Einstein-Weyl structure.
\end{remark}

Now we would like to derive the pair of 2nd order ODEs corresponding to chains of  the 3D CR structure induced on a hypersurfaces $N\subset \CC^2$. Following Remark \ref{rmk:para-cr-2D-path}, we describe $N\subset \CC^2$  as a graph  $\Im(w)=G(z,\bar z,\Re(w))$  where $(w,z)$ are local coordinates for $\CC^2.$ Changing to real coordinates, we write $z=x+\ri y$ and $w=p+\ri q.$ Then the graph is given by $q=F(x,y,p).$  One  obtains an adapted coframe on $N$ by noticing that the holomorphic contact directions are given by the Lewy operator $l=\partial_z+B(x,y,p)\partial_w$ and should satisfy $l\im\exd(q-F(x,y,p))=0.$ Thus, by restricting to $N,$ one has $B|_{N}=\frac{\ri F_y-F_x}{\ri+F_p}.$ As a result, since $\scH=\langle l\rangle,$ it follows that an adapted coframe on $N,$ corresponding to a section $s\colon N\to \cP,$ is given by
\begin{equation}
  \label{eq:CR-adapted-cof-on-N}
  s^*\omega^0=\tfrac{1}{C}\left(\exd p + 2\tfrac{F_xF_p-F_y}{F_p^2+1}\exd x+ 2\tfrac{F_yF_p+F_x}{F_p^2+1}\exd y \right),\quad s^*\omega^1=\exd x,\quad s^*\omega^2=-\exd y,
  \end{equation}
where
\[C=\tfrac{(2F_xF_p-F_p^2F_y-3F_y)F_{xp}+(2F_yF_p+F_p^2F_x+3F_x)F_{yp}-2(F_x^2+F_y^2)F_{pp}-(F_p^2+1)(F_{xx}+F_{yy})}{(F_p^2+1)^2}.\]
We refer the reader to \cite[Chapter 6]{Jacobowitz} for more detail. Similar to the case of 3D para-CR geometries, one can find the explicit form of $s^*\theta_1$ and $s^*\theta_2,$ however, unlike \eqref{eq:theta12-lift-2D-path-parametric},  the expressions are extremely long and will not be provided here. Repeating what we did for 3D para-CR geometries, one needs to lift the adapted coframe \eqref{eq:CR-adapted-cof-on-N} on $N$ to $Q.$ As a result, knowing $P=P_0\ltimes P_+,$ we restrict to $P_0=\mathrm{Id}$ and we parametrize the nilpotent part, $P_+\subset P$ as
\[P_+=\left\{B\in \mathrm{SU}(2,1)\,\vline\, B=
    \begin{pmatrix}
      1 & \ri(b_1+\ri b_2) & \half(b_1^2+b_2^2)+\ri b_0\\
      0 & 1 & -\ri(b_1-\ri b_2)\\
      0 & 0 & 1
    \end{pmatrix}
\right\}.\]
Since one has $Q\cong \cP\slash S,$ it follows that $Q$ can be identified by setting $P_0=\mathrm{Id}$ and $b_0=0.$ Thus,  $(x,y,p,b_1,b_2)$ give a local coordinate system on $Q.$
Lifting $s^*\omega^i$'s, $s^*\theta_1$ and $s^*\theta_2$ to $Q$ via the prescribed section $t\colon Q\to \cP$ can be carried out identically as in the para-CR case. This allows one to compute the quasi-symplectic 2-form $\rho=t^*(\omega^1\w\theta_1+\omega^2\w\theta_2)$ and find its characteristic direction $\scX=\langle X\rangle$ by solving $X\im\rho=0.$  To put $X$ in the form of a total derivative one needs to carry out a change of variable analogous to \eqref{eq:coordinate-change-2Dpath-chains} although the expressions involved are much longer. This will consequently give the desired pair of ODEs.

Since the resulting pair of ODEs for chains of a general CR manifold, locally given as $q=F(x,y,p),$ cannot be  written here for an arbitrary $F$ due to its length, we consider two simple cases. To describe the flat  CR structure on the 3-sphere $\SSS^3\subset\CC^2$ we use the fact that it is equivalent to the  hyperquadric  $\QQ^3\subset\CC^2$; see the discussion following Theorem \ref{thm:CR-equiv-problem}. Thus, putting $F(x,y,p)=\tfrac{1}{4}(x^2+y^2),$ the resulting pair of ODEs for its chains is found to be 
\begin{equation}
  \label{eq:chains-CR-pairs-2nd-order-ODE-F-x2}
  \begin{aligned}
y''=\frac{((y')^2+1)^2}{y' x+p'-y},\quad   p''= \frac{((y')^2+1) (p' y'-y y'-x)}{y' x+p'-y}
    \end{aligned}
\end{equation}
where  $x$ is taken as the independent variable. This pair of ODEs is torsion-free and has 8-dimensional algebra of infinitesimal symmetries isomorphic to $\mathfrak{su}(2,1)$.  Using the first ODE, $p'$ can be solved algebraically. Substituting  in the second ODE, one obtains the 3rd order ODE
\begin{equation}
  \label{eq:submax-3rd-order-ODE-2} 
  y'''=\frac{3y'(y'')^2}{1+(y')^2},
  \end{equation}
  which,  together with \eqref{eq:submax-3rd-order-ODE-1}, are the only two  submaximal 3rd order ODEs under point transformations \cite[Section 4.2]{Godlinski}. Its algebra of infinitesimal symmetries is  $\mathfrak{o}(3,1)$  and has vanishing \emph{W\"unschmann invariant} and \emph{Cartan invariant}, and, thus, induces a non-flat Einstein-Weyl structure on its solution space. Since its Weyl 1-form is closed, it is locally determined by a Lorentzian metric up to homothety. In fact, it is the homothety class of the Lorentzian metric of positive sectional curvature on $\mathrm{SO}(3,1)\slash\mathrm{SO}(2,1)\cong\SSS^{2,1}.$ 

  More generally, it turns out that if the graph of a CR structure  can be put in the form $\Im(w)=G(z,\bar z),$ i.e. the CR structure has an infinitesimal symmetry, then the same construction mentioned in Remark \ref{rmk:sl3-submaximal-3rd-order-ODE-o22} goes through. In other words, one can associate a third order ODE to the pair of second order ODEs defined by the chains of  CR structures with an infinitesimal symmetry. We do not have an invariant description or characterization of this construction. Moreover, as in the case of CR structures, any infinitesimal symmetry of a para-CR structure is almost everywhere transverse to the contact distribution.

The pair of ODEs  for the chains of a CR structure that is not locally equivalent to the flat model  becomes  cumbersome to write. As an example, taking $F(x,y,p)=\tfrac{1}{6}y^3,$ the pair of ODEs for its chains are given by
\[
  \begin{aligned}
    y''&=\tfrac{16 (y')^4 y^6+22 (y')^2 y^6+12 p' (y')^2 y^4-2 (p')^2 (y')^2 y^2+5 y^6+15 p' y^4-5 (p')^2 y^2+(p')^3}{16y^5 (p'-y^2)},\\
    p''&=\tfrac{(8 (y')^2 y^4+15 y^4+10 p' y^2-(p')^2) y'}{8y^3}.
  \end{aligned}
\]
Following our discussion above  on chains of CR structures with an infinitesimal symmetry and 3rd order ODEs,  in the above pair of ODEs  $p'$ can be solved algebraically from the first ODE. Replacing the  expression for $p'$  into the second ODE gives a 3rd order ODE in $y(x).$ It would be interesting to give a geometrically invariant description of this correspondence.

\begin{remark}\label{rmk:complex-point-trans}
The Lie algebras $\mathfrak{sl}(3,\RR)$ and $\mathfrak{su}(2,1)$ are real forms of $\mathfrak{sl}(3,\CC).$ Thus, over $\CC$ flat para-CR and CR structures are locally equivalent. As a result, the path geometry of their chains over $\CC$ are equivalent. In other words, the pairs of ODEs \eqref{eq:chains-CR-pairs-2nd-order-ODE-F-x2} and \eqref{eq:chains-flat-2D-path} define the same equivalence class under complex-valued point transformations. To express the complex-valued point transformation that relates these two pairs of ODEs, we  denote the $(x,y,p)$ variables in pair of ODEs \eqref{eq:chains-flat-2D-path} with tildes. Then the point transformation on $J^0(\CC,\CC^2)$ that sends the pair of ODEs \eqref{eq:chains-CR-pairs-2nd-order-ODE-F-x2} to \eqref{eq:chains-flat-2D-path} is
\begin{equation}
  \label{eq:complex-point-trans}
  (\tilde x,\tilde y,\tilde p)=(x-\ri y,\half(x^2+y^2)+\ri p,x+\ri y).
  \end{equation}
Moreover, using this point transformation  the submaximal 3rd order ODEs \eqref{eq:submax-3rd-order-ODE-2} and \eqref{eq:submax-3rd-order-ODE-1} are also equivalent over $\CC.$  The point transformation that relates the submaximal 3rd order ODEs above was  found in \cite[Remark 6.3]{KT-ODE} in the larger context of submaximal systems of 3rd order ODEs.
  
\end{remark}

\subsubsection{Space of chains}
\label{sec:space-chains}

Now we would like to briefly  describe the induced geometry on the space of chains for the flat para-CR geometry on $\PP T \PP^2,$ or equivalently, the flat path geometry on $\PP^2.$ In \ref{sec:an-overview-chains} chains were defined as the integral curves of $\phi^{-1}(X)$ for any $X\in \fg_{-2}$ and $Q=\cP\slash S.$  Since for the flat  path geometry on $\PP^2$ one has $\cP=\mathrm{SL}(3,\RR)$, the space of its chains is $Z:=Q\slash\mathrm{exp}(\fg_{-2})\cong \mathrm{SL}(3,\RR)\slash\mathrm{GL}(2,\RR)$ which can be identified with $\PP^2\times(\PP^2)^*\backslash N,$ where $N$ is the 3-manifold defined by  pairs of  incident points $(a,b)\in\PP^2\times(\PP^2)^*$. The reader may see \cite[Section 4.1]{BW-chains} for more detail.

  When the path geometry is flat  there is a  naturally induced self-dual \emph{para-K\"ahler-Einstein} metric on $Z$ which has been thoroughly studied and is sometimes referred to as the para-Fubini-Study metric or the dancing metric; see \cite{DW,PKE}.  Moreover, as can be seen in Proposition \ref{prop:chains-3d-cr-necessary-cond}, the torsion of 3D path geometries arising from chains of para-CR geometries is zero if and only if the para-CR geometry is flat. Using the twistor correspondence between torsion-free 3D path geometries and self-dual conformal structures \cite{Grossman-Thesis}, it follows that  $Z$ carries  a canonical  conformal structure of neutral signature if and only if the underlying para-CR geometry is flat.

In order to express the induced para-K\"ahler-Einstein metric on $Z,$ we would like to use the pair of ODEs \eqref{eq:chains-flat-2D-path} in the following way. Let $(x,y,p)$ and $(x,y,p,Y,P)$ be a coordinate system on $J^0(\RR,\RR^2)$ and $J^1(\RR,\RR^2)$ with $x$ as the independent variable. Following \cite{DFK,KM-Cayley}, the space of solutions of the point equivalence class of a pair of 2nd order ODEs  can be identified with  a hypersurface $x=\mathrm{const}.$ Given a pair of torsion-free 2nd order ODEs  $y''=F(x,y,p,y',p')$ and $p''=G(x,y,p,y',p'),$ the induced conformal structure on its solutions space is given by $[\eta^1\eta^4-\eta^2\eta^3]$ where
  \begin{equation}
    \label{eq:etas-ODE-conformal}
    \eta^1=\exd Y-\half(F_{Y}\exd y+F_P\exd p),\quad \eta^2=\exd P-\half(G_Y\exd y+G_P\exd p),\quad \eta^3=\exd y,\quad \eta^4=\exd p,
      \end{equation}
pulled-back to $x=\mathrm{const}.$ Using the expressions above, setting $y'=Y$ and $p'=P$ in \eqref{eq:chains-flat-2D-path} and adapting the coframe further to the para-complex structure, one obtains that an adapted coframe is given by
\begin{equation}
  \label{eq:adapted-coframe-pKE}
\eta^1 = \exd Y,\quad \eta^2 = \tfrac{1}{\Delta^4}(-\Delta P\exd Y+\Delta^2\exd P-P^2\exd y+2\Delta P\exd p),\quad  \eta^3 = \exd y, \quad \eta^4 =\tfrac{1}{\Delta^3}( -P\exd y+\Delta\exd p)
\end{equation}
where $\Delta=Y-p.$  In this adapted coframe the  pseudo-Riemannian Einstein metric and the symplectic 2-form for the para-K\"ahler-Einstein structure are $\eta^1\eta^4-\eta^2\eta^3$ and $\eta^1\w\eta^4+\eta^2\w\eta^3$. The integrable rank 2 null distributions corresponding  to  the so-called  \emph{para-holomorphic} and \emph{anti-para-holomorphic} distributions  are $\ker\{\eta^1,\eta^3\}$ and $\ker\{\eta^2,\eta^4\}.$

Similar to our discussion above on the para-K\"ahler-Einstein metric on the space of flat 3D para-CR chains, the space of chains on the 3-sphere  $\SSS^3\subset\CC^2$ is the homogeneous space $\mathrm{SU}(2,1)\slash\mathrm{U}(1,1),$ which can be identified with the complement of the a closed ball in $\CC\PP^2.$   As is clear from Proposition \ref{prop:chains-3d-cr-necessary-cond}, the path geometry of chains of a CR structure is torsion-free if and only if the CR structure if flat. The induced structure on the space of chains for the flat CR structure on $\SSS^3$ is the indefinite Fubini-Study metric. Mimicking the derivation in of the coframe \eqref{eq:adapted-coframe-pKE}, an adapted coframe for this K\"ahler-Einstein metric is given by 
\[  \eta^1=\tfrac{1}{\Delta}\exd Y-\tfrac{Y}{\Delta^2}\exd P-\tfrac{Y(Y^2+3)}{2\Delta^2}\exd y+\tfrac{(1+Y^2)^2}{2\Delta^3}\exd p,\ \ 
  \eta^2=\tfrac{1}{\Delta^2}(\exd P-\tfrac{3 Y^2+1}{2} \exd y),\ \  \eta^3=\exd y-\tfrac{Y}{\Delta}\exd p,\ \ \eta^4=\tfrac{1}{\Delta}\exd p\]
where $\Delta=P-y.$ With respect to this coframe, the pseudo-Riemannian Einstein metric and the symplectic 2-form for the K\"ahler-Einstein structure are given by $\eta^1\eta^4-\eta^2\eta^3$ and $\eta^1\w\eta^4+\eta^2\w\eta^3.$ The holomorphic and anti-holomorphic sub-bundles of the complexified tangent bundle are $\ker\{\eta^1+\ri\eta^2,\eta^3+\ri\eta^4\}$ and $\ker\{\eta^1-\ri\eta^2,\eta^3-\ri\eta^4\},$ respectively.

\subsection{Characterization of chains: para-CR 3-manifolds}
\label{sec:path-geom-aris-1}
 In this section we present a way of determining whether a 3D path geometry arises as the chains of a para-CR geometry. We first note that by \eqref{eq:TC-3D-to-2D}, a necessary condition for such 3D path geometries is that the curvature $\bC$ has two distinct real roots of multiplicity 2. We use the following proposition in order to describe our characterization.
\begin{proposition}\label{prop:type-D-curv-chains}
  Given a 3D path geometry $(Q,\scX,\scV)$ with associated Cartan geometry $(\cG\to Q,\psi),$ where $\psi$ is given as \eqref{eq:path-geom-cartan-conn-3D}, if the curvature $\bC$ has 2 distinct real roots of multiplicity 2, then there is a principal $B$-subbundle $\iota\colon\cG_{D_r}\hookrightarrow\cG,$  where  $B\subset\mathrm{GL}(2,\RR)$ is the Borel subgroup, over which 
  \[\iota^* W_0=\iota^*W_1=\iota^*W_3=\iota^* W_4=0,\quad \iota^*W_2=\pm 1,\]
  and the components of $\iota^*\psi$, as given in \eqref{eq:path-geom-cartan-conn-3D}, satisfy
  \[
    \begin{aligned}
      \iota^*\psi^1_2\equiv& 0&&\mod \{\iota^*\alpha^0,\iota^*\alpha^2,\iota^*\beta^2\},\\
      \iota^*\psi^2_1\equiv& 0&&\mod \{\iota^*\alpha^0,\iota^*\alpha^1,\iota^*\beta^1\},\\
      \iota^*\psi^2_2\equiv& -\iota^*\psi^1_1&&\mod \{\iota^*\alpha^0,\iota^*\alpha^1,\iota^*\alpha^2,\iota^*\beta^1,\iota^*\beta^2\}\\
      \iota^*\nu_1,\iota^*\mu_1,\iota^*\nu_2,\iota^*\mu_2\equiv & 0&&\mod  \{\iota^*\alpha^0,\iota^*\alpha^1,\iota^*\alpha^2,\iota^*\beta^1,\iota^*\beta^2\}.
    \end{aligned}
  \]
  The 2-form
  \begin{equation}
    \label{eq:rho-GDr}
    \rho=\alpha^1\w\beta^2+\alpha^2\w\beta^1\in\Omega^2(\cG_{D_r})
  \end{equation}
  defines an invariant 2-form on $Q.$
\end{proposition}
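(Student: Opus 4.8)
The plan is to argue in two stages: first, to produce the reduced bundle $\cG_{D_r}$ and establish the normalization of the Cartan connection on it; second, to check that the 2-form $\rho = \alpha^1\w\beta^2 + \alpha^2\w\beta^1$ is well-defined on $Q$ (i.e. invariant under the structure group of $\cG_{D_r}\to Q$). The curvature $\bC$ is a section of $\mathrm{Sym}^4(\scV^*)\otimes\bigwedge^2\scV\otimes\scX^{-1}$, and by hypothesis it has two distinct real roots of multiplicity $2$, so after a $\mathrm{GL}(2,\RR)$-frame change one can arrange $W_0 = W_1 = W_3 = W_4 = 0$ and $W_2 = \pm 1$; this cuts the structure group down to the Borel $B\subset\mathrm{GL}(2,\RR)$ (the subgroup preserving the flag spanned by the two roots while rescaling), defining $\cG_{D_r}\hookrightarrow\cG$. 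The congruences on $\iota^*\psi^i_j$, $\iota^*\nu_a$, $\iota^*\mu_a$ are then obtained by differentiating the normalization $\iota^*W_0=\cdots=\iota^*W_4\mp 1=0$ and comparing with the Bianchi identities \eqref{eq:W-A-curvature-torsion-Bianchies} from the Appendix: setting $\exd(\iota^*W_k)=0$ for $k=0,1,3,4$ forces the connection components transverse to the flag to vanish modulo the semibasic forms, which is exactly the displayed list of congruences.

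For the second stage, the key observation is that on $\cG_{D_r}$ the residual structure group $B$ acts on the coframe $(\alpha^0,\alpha^1,\alpha^2,\beta^1,\beta^2)$ in a way dictated by the grading: the $\fg_0$-part acts by the Borel, so $\alpha^1\mapsto a\alpha^1$, $\alpha^2\mapsto a^{-1}(\alpha^2 + c\,\alpha^1)$ (and correspondingly on $\beta^1,\beta^2$ with a possibly inverse weight), while the nilpotent $\fg_1$-part of $P_{12}$ adds multiples of $\alpha^1,\alpha^2,\alpha^0$ to the $\beta$'s. One then computes $r_g^*\rho$ for $g\in B$ directly from these transformation rules: the diagonal part of $B$ preserves $\alpha^1\w\beta^2+\alpha^2\w\beta^1$ because the two weight factors cancel (this is why one symmetrizes rather than antisymmetrizes), the unipotent part of $\fg_0$ contributes terms in $\alpha^1\w\beta^1$ which cancel against the corresponding change in the other summand, and the $\fg_1$-part contributes only terms $\alpha^i\w\alpha^j$ and $\alpha^0\w\alpha^i$, which vanish by the multi-contact relations $\exd\alpha^i\equiv\alpha^0\w\beta^i$ together with the congruences just established. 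Hence $r_g^*\rho \equiv \rho$ modulo the ideal generated by the $\alpha^i$ that are pulled back to zero on the fibers, and since $\rho$ is manifestly semibasic for $\cG_{D_r}\to Q$, it descends to a well-defined 2-form on $Q$.

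The main obstacle will be the bookkeeping in the first stage: one must track exactly which components of the $\fg_0$- and $\fg_1$-valued parts of $\psi$ survive the reduction, and the Bianchi identities \eqref{eq:fels-invariants-path} together with the trace conditions $T^i_i=0$, $C^i_{ijk}=0$ couple the $W_k$ to the torsion $A_0,A_1,A_2$ in a way that needs care — in particular, showing that $\iota^*\psi^2_2\equiv-\iota^*\psi^1_1$ (the vanishing of the "trace rescaling") rather than some nonzero combination requires using $C^i_{ijk}=0$ at the normalized frame. Once the congruences are in hand, the invariance computation for $\rho$ is essentially a finite representation-theoretic check on how $B\subset P_{12}$ acts on $\bigwedge^2$ of the graded pieces, and should be routine. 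I would also remark that the sign ambiguity $W_2=\pm 1$ corresponds to a $\ZZ_2$ that does not affect $\rho$ up to overall sign, so the invariant 2-form on $Q$ is defined up to sign, which is all that is claimed.
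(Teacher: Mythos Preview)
There is a genuine gap in the first stage. Normalizing the curvature coefficients $W_0=W_1=W_3=W_4=0$, $W_2=\pm 1$ only reduces the reductive part $P_0=\RR^*\times\mathrm{GL}(2,\RR)$ of the structure group to its diagonal torus $(\RR^*)^2$; it does \emph{not} touch the nilpotent part $P_+$ at all, since the $W_k$ are $P_+$-invariant. Consequently the Bianchi identities \eqref{eq:W-A-curvature-torsion-Bianchies} for the $W_k$ involve only the $\fg_0$-components $\psi^i_j$, and differentiating $\iota^*W_k=0$ yields at best $\psi^1_2,\psi^2_1,\psi^1_1+\psi^2_2\equiv 0$ modulo \emph{all} semibasic forms $\{\alpha^0,\alpha^1,\alpha^2,\beta^1,\beta^2\}$. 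It gives neither the sharper congruences $\psi^1_2\equiv 0\bmod\{\alpha^0,\alpha^2,\beta^2\}$ etc., nor any constraint whatsoever on $\nu_1,\nu_2,\mu_1,\mu_2$, which live in $\fg_1$.

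What is missing is a further reduction using $P_+$. After the first two steps one writes $\psi^1_2=A^1_{2i}\alpha^i+B^1_{2a}\beta^a$, $\psi^2_1=A^2_{1i}\alpha^i+B^2_{1a}\beta^a$ on the intermediate bundle. The $P_+$-parameters $q_1,q_2,p_1,p_2$ act by affine shifts on the four coefficients $B^2_{12},B^1_{21},A^2_{12},A^1_{21}$ (cf.\ \eqref{eq:AB-group-action-chains}), so one may normalize these to zero, killing four of the five $P_+$-parameters and leaving a genuine $3$-dimensional structure group $(\RR^*)^2\ltimes\RR$ (the surviving $\RR$ is the parameter $p_0$, not an off-diagonal entry of $\mathrm{GL}(2,\RR)$). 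The infinitesimal version of this action, namely $\exd B^1_{21}\equiv\nu_2$, $\exd B^2_{12}\equiv\nu_1$, $\exd A^1_{21}\equiv\mu_2$, $\exd A^2_{12}\equiv\mu_1$ modulo semibasic forms, is precisely what forces $\nu_a,\mu_a$ to become semibasic on $\cG_{D_r}$; and setting $A^1_{21}=B^1_{21}=0$ is what sharpens $\psi^1_2\equiv 0$ from ``mod all semibasic'' to ``mod $\{\alpha^0,\alpha^2,\beta^2\}$''. Your sketch has no mechanism to produce either of these conclusions.

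A smaller point: your description of the residual $B$-action in stage two is also off. On $\cG_{D_r}$ the $\fg_0$-part acts diagonally (both roots of $\bC$ are pinned, not just a flag), so there is no off-diagonal ``$c$'' mixing $\alpha^1$ and $\alpha^2$; the unipotent parameter is $p_0\in P_+$, which lives at the $\fg_2$-level and affects $\alpha^0,\beta^a$ differently from what you wrote. Once the correct structure group is identified the invariance of $\rho$ is indeed a routine check, but the transformation rules you sketch do not match it.
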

\begin{proof}
  The proof is done via a standard application of Cartan's reduction procedure in the following way. Recall that a 3D path geometry is a Cartan geometry of type $(\mathrm{PSL}(4,\RR),P)$ where $P=P_0\ltimes P_+$ is the stabilizer of a flag of a line inside a plane in $\RR^4$,  $P_0=\RR^*\times\mathrm{GL}(2,\RR)$ is the reductive subgroup of $P,$ referred to as the structure group, and $P_+$ is the nilpotent normal subgroup of $P$. As was mentioned before, the curvature of a 3D path geometry, $\bC,$ can be presented as the quartic \eqref{eq:quadric-quartic} with an induced  $\mathrm{GL}(2,\RR)$-action by the structure group. Parametrically,  the structure group $P_0$ is a  block-diagonal matrix expressed as
  \begin{equation}
    \label{eq:P-0-3Dpath}
    P_0=\left\{A\in\mathrm{PSL}(4,\RR)\ \vline \ A=\mathrm{diag}(\tfrac{1}{a_{00}\det(\bH)},a_{00},\bH),\bH=
    \begin{pmatrix}
      a_{11} & a_{12}\\
      a_{21} & a_{22}
    \end{pmatrix}
  \right\}
\end{equation}
and one has 
\begin{equation}
  \label{eq:P-p-3Dpath}
P_+=\left\{B\in\mathrm{PSL}(4,\RR)\ \vline\  B=
    \begin{pmatrix}
      \hspace{-.3cm}1 & \hspace{-.3cm} p_0 & p_1+\half p_0q_1 & p_2+\half p_0q_2 & \\
      \hspace{-.3cm}0 & \hspace{-.3cm} 1 & q_1 & q_2 &\\
      \ \ 0_{2\times 1} & \ \ 0_{2\times 1} & &\hspace{-1.9cm} \mathrm{Id}_{2\times 2}&\\
    \end{pmatrix}
\right\}.\end{equation}
Since it is assumed that $\bC$ has two distinct real roots, using the action of $\mathrm{GL}(2,\RR),$ it is possible to translate the real roots to  $0$ and $\infty.$ More explicitly, for a choice of trivialization of $\cG,$ let
$\bC(u)=\Sigma_{i=0}^4
{\tiny\begin{pmatrix}
  4\\ i
\end{pmatrix}}
W_i(u)(\beta^1)^{4-i}(\beta^2)^i$
be the curvature at $u\in\cG.$ Using the right action  of the fibers, for  $g\in P$  and $u\in\cG$ the equivariant transformation of the Cartan connection and Cartan curvature under the gauge transformation implies
\[\psi(u)\to \psi(r_gu)=g^{-1}\psi(u)g+g^{-1}\exd g,\quad \Psi(u)\to\Psi(r_gu)= g^{-1}\Psi(u)g.\]
Consequently,    using the group parameters in  \eqref{eq:P-0-3Dpath} and \eqref{eq:P-p-3Dpath} to express $g\in P$, it is straightforward to obtain 
\begin{equation}
  \label{eq:action-W0-W4}
    \begin{aligned}
      W_0(g^{-1}u)=&W_0(u)a_{11}^4+4W_1(u)a_{11}^3a_{21}+6W_2(u)a_{11}^2a_{21}^2+4W_3(u)a_{11}a_{21}^3+W_4(u)a_{21}^4,\\
      W_4(g^{-1}u)=&W_0(u)a_{12}^4+4W_1(u)a_{12}^3a_{22}+6W_2(u)a_{12}^2a_{22}^2+4W_3(u)a_{12}a_{22}^3+W_4(u)a_{22}^4.
  \end{aligned}
  \end{equation}
  Take $g\in P$ such that $a_{11}=a_{22}=1.$ Since $\bC(u)$ has two distinct real roots, by \eqref{eq:action-W0-W4}, group parameters  $a_{12}$ and $a_{21}$ can be chosen so that  at $w=g^{-1}u\in\cG$ one has $W_0(w)=W_4(w)=0.$ Since both roots have multiplicity two and are distinct, it follows that $W_1(w)=W_3(w)=0$ and $W_2(w)\neq 0.$  As a result, one can define a sub-bundle $\iota_1\colon\cG^{(1)}\hookrightarrow\cG$ characterized by 
  \begin{equation}
    \label{eq:G1-first-reduction-curvature-chains)}
    \cG^{(1)}=\left\{u\in\cG\ \vline \ W_0(u)=W_1(u)=W_3(u)=W_4(u)=0\right\}.
      \end{equation}
      By our discussion above, the bundle  $\cG^{(1)}\to Q$ is a principal $P^{(1)}$-bundle where $P^{(1)}=(\RR^*)^3\ltimes P^+$ and $(\RR^*)^3\subset P_0$ is the Cartan subgroup, given by setting $a_{12}=a_{21}=0$ in \eqref{eq:P-0-3Dpath}. As a result, one obtains that $\iota^*_1\bC=6W_2(\beta^1)^2(\beta^2)^2,$ where by abuse of notation we have suppressed $\iota_1^*$ on the right hand side.  Moreover, the pull-back of   the Bianchi identities for $\exd W_3$ and $\exd W_1,$ given in \eqref{eq:W-A-curvature-torsion-Bianchies}, to $\cG^{(1)}$ gives $\iota_1^*\psi^2_1\equiv 0$ and $\iota_1^*\psi^1_2\equiv 0$ modulo $\{\iota^*_1\alpha^0,\iota^*_1\alpha^1,\iota^*_1\alpha^2,\iota^*_1\beta^1,\iota^*_1\beta^2\}.$ Suppressing  $\iota^*_1,$ one can write
      \begin{equation}
        \label{eq:psi12-psi21-chains}
        \psi^1_2=A^1_{2i}\alpha^i+B^1_{2a}\beta^a,\qquad \psi^2_1=A^2_{1i}\alpha^i+B^2_{1a}\beta^a,
      \end{equation}
      for some functions $A^a_{bi}$ and $B^a_{bc}$ on $\cG^{(1)}.$

      Since the curvature $\bC$ has to have two distinct real roots, it follows that $W_2\neq 0$ on $\cG^{(1)}.$ The action of $P^{(1)}$ on $W_2$ is given by
      \begin{equation}
        \label{eq:W2-group-action}
        W_2(g^{-1}u)=a_{11}^2a_{22}^2W_2(u).
      \end{equation}
      Thus, depending on the sign of $W_2,$ one can normalize it to $\pm 1.$   From now on we assume   $W_2> 0$ since the case $W_2<0$ can be treated identically.   See Remark \ref{rmk:sign-of-W_2-chains} for the difference of outcome in these two cases.  Define a sub-bundle $\iota_2\colon\cG^{(2)}\hookrightarrow \cG^{(1)}$ as
      \begin{equation}
        \label{eq:P2-2nd-reduction}
        \cG^{(2)}=\left\{u\in\cG^{(1)}\ \vline\ W_2(u)=1\right\}.
      \end{equation}
      It follows that $\cG^{(2)}\to Q$ is a principal $P^{(2)}$-bundle where $P^{(2)}=(\RR^*)^2\ltimes P_+$ and $(\RR^*)^2\subset P_0$ is given by $a_{12}=a_{21}=0$ and $a_{22}=1/a_{11}$ in \eqref{eq:P-0-3Dpath}. Using the Bianchi identities \eqref{eq:W-A-curvature-torsion-Bianchies}, via pull-back to $\cG^{(2)},$ one obtains
      \begin{equation}
        \label{eq:psi22-reduced-chains}
        \psi^2_2=-\psi^1_1+A^2_{2i}\alpha^i+B^2_{2a}\beta^a
      \end{equation}
      for functions $A^2_{2i}$ and $B^2_{2a}$ on $\cG^{(2)}.$

      The pull-back of the Cartan connection $\psi$ to $\cG^{(1)}$ and $\cG^{(2)}$ is no longer equivariant under the action of the fibers $P^{(1)}$ and $P^{(2)}$, respectively. Nevertheless, the pull-back of $\psi$ to them defines a trivialization of the tangent bundle  $T\cG^{(2)}$, i.e. a so-called \emph{$\{e\}$-structure}. It is straightforward to find the action of the fibers on the quantities $A^a_{bk}$ and $B^a_{bc}.$ In particular, using the parametrization in \eqref{eq:P-p-3Dpath}, an action by $g\in P^{(2)}$ gives
      \begin{equation}\label{eq:AB-group-action-chains}
        \begin{aligned}
          B^1_{21}(g^{-1}u)=&\tfrac{1}{a_{11}a_{00}}B^1_{21}(u)+q_2\\
          B^2_{12}(g^{-1}u)=&\tfrac{a_{11}}{a_{00}}B^2_{12}(u)+q_1\\
          A^1_{21}(g^{-1}u)=&{\tfrac{a_{00}^2}{a_{11}^2}q_1A^1_{20}(u)+\tfrac{a_{00}}{a_{11}}A^1_{21}(u)-\tfrac{1}{a_{11}a_{00}}p_0B^{1}_{21}(u) -\half p_0q_2+p_2}\\
          A^2_{12}(g^{-1}u)=& a_{00}^2a_{11}^2q_2A^2_{10}(u)+a_{00}a_{11}A^2_{12}(u)-\tfrac{a_{11}}{a_{00}}p_0B^2_{12}(u)-\half p_0q_1+p_1
        \end{aligned}
      \end{equation}
      Infinitesimally, these actions correspond to Bianchi identities 
      \begin{equation}
        \label{eq:AB-inf-action-Bianchies-chains}
        \begin{aligned}
          \exd B^1_{21}\equiv& -(\psi^0_0+\psi^1_1)B^1_{21}+\nu_2\\
          \exd B^2_{12}\equiv& -(\psi^0_0-\psi^1_1)B^2_{12}+\nu_1\\
          \exd A^1_{21}\equiv&  \nu_1A^1_{20}+(\psi^0_0-\psi^1_1)A^1_{21}-B^1_{21}\mu_0+\mu_2\\
          \exd A^2_{12}\equiv&  \nu_2A^2_{10}+(\psi^0_0+\psi^1_1)A^2_{12}-B^2_{12}\mu_0+\mu_1
        \end{aligned}
      \end{equation}
      modulo $\{\alpha^0,\alpha^1,\alpha^2,\beta^1,\beta^2\}.$ As a result, the sub-bundle $\iota_3\colon\cG^{(3)}\hookrightarrow \cG^{(2)}$ given by
      \begin{equation}
        \label{eq:G3-reduced-Pp-chains}
        \cG^{(3)}=\{u\in\cG^{(2)}\ \vline\ B^1_{21}(u)=B^2_{12}(u)=A^1_{21}(u)=A^2_{12}(u)=0\}
      \end{equation}
      is well-defined as a principal $B$-bundle where $B\cong(R^*)^2\ltimes\RR\subset \mathrm{GL}(2,\RR)$ is the Borel subgroup. In terms of parametrizations \eqref{eq:P-0-3Dpath} and \eqref{eq:P-p-3Dpath} for $P=P_0\ltimes P_+,$ one can express $B\subset P$ as $a_{22}=1/a_{11}$ and $a_{12}=a_{21}=p_1=p_2=q_1=q_2=0.$

      In the differential relations \eqref{eq:AB-inf-action-Bianchies-chains}, the pull-back for the first two equations to $\cG^{(3)}$ imply $\nu_1,\nu_2$ vanish mod $\{\alpha^i,\beta^a\}.$ Consequently, the last two relations  imply $\mu_1$ and $\mu_2$ vanish mod $\{\alpha^i,\beta^a\}.$ The reduction of $\mu_1,\mu_2,\nu_1,\nu_2$ on $\cG^{(3)},$ together with the pull-back of \eqref{eq:psi12-psi21-chains} and \eqref{eq:psi22-reduced-chains} to $\cG^{(3)}$ finishes the proof of the first part, where $\iota:=\iota_1\circ\iota_2\circ\iota_3$ and $\cG_{D_r}:=\cG^{(3)}.$

      Lastly, one can check that $\rho$ in \eqref{eq:rho-GDr} is well-defined on $\cG_{D_r}$ and is invariant under the action of the fibers of  $\cG_{D_r}\to Q.$ Thus, it defines an invariant 2-form on $Q.$
    \end{proof}
     \begin{remark}\label{rmk:D-r-curvature-type}
   A basic invariant of a binary quartic, such as the curvature $\bC,$ acted on by $\mathrm{GL}(2,\RR),$ is its root type. Motivated by the Petrov classification of the self-dual and anti-self-dual Weyl curvature of a Lorentzian conformal structure, one finds 10 possible algebraic types for the quartic $\bC$ in our setting depending on the multiplicity and reality of the root. 
Motivated by the symbols used for Petrov types,  when a quartic has two distinct real roots of multiplicity two its algebraic type is denoted by $D_r,$ hence we denote the  reduced 8-dimensional bundle in this case by $\cG_{D_r}.$  
\end{remark}
    \begin{remark}
      In the proof of  Proposition \ref{prop:type-D-curv-chains} it was not necessary to know the  explicit group actions as given in \eqref{eq:action-W0-W4}, \eqref{eq:W2-group-action}, and  \eqref{eq:AB-group-action-chains}. We provided the explicit form in order to clarify the reduction procedure. In order to carry out such reductions it suffices to have the infinitesimal form of the group action on invariants which, as mentioned above, on $\cG$ and $\cG^{(2)}$  are given by the Bianchi identities \eqref{eq:W-A-curvature-torsion-Bianchies}, and \eqref{eq:AB-group-action-chains}, respectively. We refer the reader to \cite{Gardner-Book} for a discussion on the relation between explicit group action and its infinitesimal form and also for the notion of an $\{e\}$-structure in the context of Cartan's method of equivalence, which appeared in the proof above. 
    \end{remark}

Before proving the main theorem in the para-CR case, we prove the following intermediate theorem which identifies a natural class of 3D path geometries that contains chains as a proper subclass. 
\begin{theorem}\label{thm:2d-path-geometries-generalized-chains}
Let $(\cG\to Q,\psi)$ be the Cartan geometry associated with a 3D path geometry $(Q,\scX,\scV)$ satisfying the following conditions:
  \begin{enumerate}
  \item The quartic $\bC$ has two distinct real roots of multiplicity 2. 
  \item The invariantly defined 2-form $\rho=\alpha^1\w\beta^2+\alpha^2\w\beta^1\in\Omega^2(Q)$   from Proposition \ref{prop:type-D-curv-chains} is closed.
  \end{enumerate}
  Then the Pfaffian systems $\cI_2:=\{\alpha^0,\alpha^1\}$ and $\cI_1=\{\alpha^0,\alpha^2\}$ are integrable and the 3D leaf space of $\{\alpha^0,\alpha^1,\alpha^2\},$ denoted by $N,$ is equipped with a para-CR structure.  The projection of each path on $Q$ to $N$ is transverse to the contact distribution  $\ker\alpha^0\subset TN.$ The invariants $P_1$ and $Q_1$ of such  para-CR geometries  $(N,\scD_1,\scD_2)$ depend on the 4th jet of torsion entries $A_0$ and $A_2$ of the 3D path geometry, respectively. 
\end{theorem}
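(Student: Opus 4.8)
The plan is to carry out the whole analysis upstairs, on the reduced bundle $\iota\colon\cG_{D_r}\hookrightarrow\cG$ produced by Proposition \ref{prop:type-D-curv-chains}, where $\iota^*\bC=6(\beta^1)^2(\beta^2)^2$ and the connection forms satisfy the congruences listed there. On the eight-dimensional $\cG_{D_r}$ one has the semi-basic coframe $\alpha^0,\alpha^1,\alpha^2,\beta^1,\beta^2$ for $\cG_{D_r}\to Q$ together with the three residual fibre 1-forms $\psi^0_0,\psi^1_1,\mu_0$. The first step is to write out the structure equations $\exd\alpha^\mu,\exd\beta^a$ in this coframe from \eqref{eq:path-geom-cartan-conn-3D}, using the congruences of Proposition \ref{prop:type-D-curv-chains} and the Bianchi identities \eqref{eq:W-A-curvature-torsion-Bianchies}. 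One obtains
\[
\exd\alpha^1\equiv\alpha^0\w\beta^1+\alpha^2\w\psi^1_2\ \ \mathrm{mod}\ \{\alpha^1\},\qquad
\exd\alpha^2\equiv\alpha^0\w\beta^2+\alpha^1\w\psi^2_1\ \ \mathrm{mod}\ \{\alpha^2\},
\]
and $\exd\alpha^0\equiv\alpha^1\w\nu_1+\alpha^2\w\nu_2 \ \mathrm{mod}\ \{\alpha^0\}$; since on $\cG_{D_r}$ the forms $\psi^1_2,\psi^2_1,\nu_1,\nu_2$ are semi-basic with $\psi^1_2\in\{\alpha^0,\alpha^2,\beta^2\}$ and $\psi^2_1\in\{\alpha^0,\alpha^1,\beta^1\}$, the Pfaffian systems $\cI_2=\{\alpha^0,\alpha^1\}$ and $\cI_1=\{\alpha^0,\alpha^2\}$ are Frobenius-integrable on $\cG_{D_r}$ exactly when the $\beta^2$-component of $\psi^1_2$, the $\beta^1$-component of $\psi^2_1$, and all $\beta$-components of $\nu_1,\nu_2$ vanish — conditions not guaranteed by Proposition \ref{prop:type-D-curv-chains} alone. (The system $\{\alpha^0,\alpha^1,\alpha^2\}$ is always integrable, cutting out $\scV$.)

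The heart of the argument is the computation of $\exd\rho$ for $\rho=\alpha^1\w\beta^2+\alpha^2\w\beta^1$, followed by the analysis of $\exd\rho=0$. Expanding $\exd\rho=\exd\alpha^1\w\beta^2-\alpha^1\w\exd\beta^2+\exd\alpha^2\w\beta^1-\alpha^2\w\exd\beta^1$ in the coframe and equating each coefficient against the basis of $3$-forms to zero gives a system of algebraic relations on the structure functions; solving it — using the normalizations of Proposition \ref{prop:type-D-curv-chains}, the Bianchi identities, and the consequences of $\exd^2=0$ — forces precisely the vanishings above and, in addition, puts the torsion in normal form $\iota^*\bT=A_0(\beta^1)^2+A_2(\beta^2)^2$ (in particular $A_1=0$), in agreement with \eqref{eq:TC-3D-to-2D} and with Propositions \ref{prop:chains-2D-path-necessary-cond}--\ref{prop:chains-3d-cr-necessary-cond}; conversely these relations imply $\exd\rho=0$. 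Granting $\exd\rho=0$, the systems $\cI_1,\cI_2$ are then integrable on $\cG_{D_r}$, and since they are invariant under the residual structure group $B$ (which acts on $\alpha^0,\alpha^1,\alpha^2$ by scalar multiplications), they descend to $Q$; a dimension count on $\cG_{D_r}$ produces the $2$-dimensional leaf spaces $M$ of $\cI_2$ and $T$ of $\cI_1$, the $3$-manifold $N=\cG_{D_r}/\{\alpha^0,\alpha^1,\alpha^2\}=Q/\scV$, and the double fibration $M\leftarrow N\rightarrow T$; the fibres of $N\to M$ and $N\to T$ descend to the line fields $\scD_2=\ker\{\alpha^0,\alpha^1\}|_N$ and $\scD_1=\ker\{\alpha^0,\alpha^2\}|_N$, which span the contact distribution $\scC=\ker\alpha^0\subset TN$.

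This exhibits $(N,\scD_1,\scD_2)$ as a $2$D path structure in the sense of \ref{sec:2D-review}, whose two mutually dual path geometries are, by definition, those induced on $M$ and $T$ via $M\leftarrow N\rightarrow T$. To see that $\cG_{D_r}$ realizes the Cartan bundle of this structure, one checks that, after the reductions enforced by $\exd\rho=0$, the $1$-forms on $\cG_{D_r}$ reorganize into an $\mathfrak{sl}(3,\RR)$-valued Cartan connection whose curvature has the shape \eqref{eq:path-geom-curv}; comparison with Theorem \ref{thm:2D-path-geome} then reads off $T_1$ and $C_1$ as universal coframe-derivative expressions, of order at most four, in the $3$D torsion entries $A_0$ and $A_2$ respectively (for the chains themselves this collapses to the identifications $A_0=T_1$, $A_2=C_1$). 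Transversality is immediate: a path on $Q$ is an integral curve of $\scX=\langle\tfrac{\partial}{\partial\alpha^0}\rangle$, and since $\alpha^0$ is semi-basic over $N$ with $\alpha^0\big(\tfrac{\partial}{\partial\alpha^0}\big)=1$, its image in $N$ is everywhere transverse to $\scC=\ker\alpha^0$.

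The step I expect to be the main obstacle is the second one. The computation of $\exd\rho$ and the extraction of its consequences is a long structure-equation manipulation, and the delicate point is to show that $\exd\rho=0$ is not merely sufficient for but in fact equivalent to the component-vanishings needed for integrability — i.e. that no other cancellation among the auxiliary structure functions can make $\rho$ closed — which requires feeding the Bianchi identities of the appendix back into the expressions for $\exd\beta^1$ and $\exd\beta^2$. A secondary subtlety is the final clause: identifying $\cG_{D_r}$ with the $2$D Cartan bundle may require disposing of a residual one-parameter normalization, and then the dependence of $T_1,C_1$ on the jets of $A_0,A_2$ is obtained by tracking which coframe derivatives survive the reorganization into the $\mathfrak{sl}(3,\RR)$-connection.
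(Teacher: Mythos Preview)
Your proposal is correct and follows essentially the same route as the paper: reduce to $\cG_{D_r}$ via Proposition \ref{prop:type-D-curv-chains}, expand $\exd\rho$ in the coframe, read off the vanishing conditions (in particular $A_1=0$ and the reductions of $\psi^1_2,\psi^2_1,\nu_a,\mu_a$), check that these force the integrability of $\cI_1,\cI_2$, and then assemble the $\mathfrak{sl}(3,\RR)$-valued connection \eqref{eq:2D-path-geom-cartan-conn} on $\cG_{D_r}\to N$ whose curvature yields $T_1,C_1$ as fourth-order coframe derivatives of $A_0,A_2$. The paper carries this out with explicit formulae \eqref{eq:quasi-symp-consequences-chains}--\eqref{eq:TU-general-chain}; your concern about a ``residual one-parameter normalization'' is unfounded, since $\dim\cG_{D_r}-\dim N=5=\dim P_{12}$ already matches, so $(\cG_{D_r}\to N,\phi)$ is the $2$D Cartan bundle on the nose.
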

\begin{proof}
  By Proposition \ref{prop:type-D-curv-chains}, from condition (1) one obtains a sub-bundle $\iota\colon\cG_{D_r}\hookrightarrow\cG$ which is a principal $B$-bundle  $\cG_{D_r}\to Q.$

By Proposition \ref{prop:type-D-curv-chains},   the condition $\exd\rho=0$ is invariant and implies the vanishing conditions
  \[A_1=A^1_{20}=A^2_{10}=A^2_{20}=A^2_{21}=A^2_{22}=B^2_{21}=B^2_{22}  = 0,\]
for the functions $A^i_{jk}$ and $B^a_{bi}$  on $\cG_{D_r}$ defined  in \eqref{eq:psi12-psi21-chains} and \eqref{eq:psi22-reduced-chains}. In particular, by \eqref{eq:psi22-reduced-chains}, one has $\psi^2_{2}=-\psi^1_1.$ Checking the differential consequences of this vanishing condition is a matter of tedious computation which yields 
  \begin{equation}
    \label{eq:quasi-symp-consequences-chains}
    \begin{gathered}
      \psi^1_2=0,\quad \psi^2_1=0,\quad \psi^2_2=-\psi^1_1,\quad \nu_1=\half\alpha^2,\quad \nu_2=-\half\alpha^1,\\ \mu_1=-\half\beta^2-\tfrac{1}{4}A_{0;\underline{22}}\alpha^2-\tfrac{1}{4}A_{0;\underline{2}}\alpha^0,\quad \mu_2=\half\beta^1+\tfrac{1}{4}A_{2;\underline{11}}\alpha^1+\tfrac{1}{4}A_{2;\underline{1}}\alpha^0.
    \end{gathered}
  \end{equation}
  where the pull-back $\iota^*$ is suppressed.  We refer the reader to \ref{sec:conventions} for the definition of some of the coefficients appearing  in \eqref{eq:quasi-symp-consequences-chains}, i.e. first and second coframe derivatives of $A_0$ and $A_2$. Note that the relations $A_{0;\underline{22}}=-A_{2;\underline{11}}$  and $A_{0;\underline{21}}=A_{2;\underline{21}}=0$ follow from $\exd\rho=0$.

  Using the relations \eqref{eq:quasi-symp-consequences-chains} to compute the Cartan curvature $\Psi$ for the 3D path geometry on $\cG_{D_r}$, it follows that on $\cG_{D_r}$ one has
  \begin{equation}
    \label{eq:general-chain-2D-path}
    \begin{aligned}
      \exd\alpha^0=&-2\psi^0_0\w\alpha^0+\alpha^1\w\alpha^2,\\
      \exd\alpha^1=&(-\psi^0_0-\psi^1_1)\w\alpha^1-\beta^1\w\alpha^0,\\
      \exd\alpha^2=&(-\psi^0_0+\psi^1_1)\w\alpha^2-\beta^2\w\alpha^0.
    \end{aligned}
  \end{equation}
  Thus, the Pfaffian systems $\cI_2:=\{\alpha^0,\alpha^1\}$ and $\cI_1:=\{\alpha^0,\alpha^2\}$ are integrable. Moreover, by \eqref{eq:general-chain-2D-path}, the leaf space of $\{\alpha^0,\alpha^1,\alpha^2\}$ denoted by $N,$ defines a 3D para-CR geometry on $M$ with contact distribution $\ker\alpha^0=\scD_1\oplus\scD_2$ where $\scD_1:=\ker\cI_1=\langle\tfrac{\partial}{\partial\alpha^1}\rangle$ and $\scD_2:=\ker\cI_2=\langle\tfrac{\partial}{\partial\alpha^2}\rangle.$ Furthermore, using the quotient map $\nu\colon Q\to N,$ it is clear that the tangent line to paths on $Q,$ i.e. $\langle\tfrac{\partial}{\partial\alpha^0}\rangle,$ are mapped to  lines that are transverse to the contact distribution via $\nu_*.$

More precisely,  it follows that  the Cartan geometry for the 3D para-CR geometry on $N$ is given by $(\cG_{D_r}\to N,\phi)$ where
\begin{equation}
  \label{eq:phi-2D-path-general-chain}
  \phi=    \begin{pmatrix} 
      \tfrac{1}{6} A_{0;\underline{22}}\alpha^0-\psi^0_0-\tfrac 13\psi^1_1 & B_{20}\alpha^0-\beta^2 & \mu_0+B_{0i}\alpha^i\\
      \alpha^1 & -\tfrac{1}{12}A_{0;\underline{22}}\alpha^0+\tfrac 23\psi^1_1 & B_{10}\alpha^0+B_{11}\alpha^1+\beta^1  \\
      \alpha^0 & \alpha^2 & -\tfrac{1}{12}A_{0;\underline{22}}\alpha^0+\psi^0_0-\tfrac 13\psi^1_1   
    \end{pmatrix}
    \end{equation}
in which
  \[
    \begin{gathered}
      B_{10}=\tfrac{7}{36}A_{2;\underline{11}2}-\tfrac 19A_{2;\underline{1}2\underline{1}},\quad B_{11}=-\tfrac{1}{4}A_{0;\underline{22}},\quad B_{20}=\tfrac{7}{36}A_{0;\underline{22}1}-\tfrac 19A_{0;\underline{2}1\underline{2}},\\
      B_{02}=\tfrac{5}{36}A_{2;\underline{11}2}+\tfrac{1}{36}A_{2;\underline{1}2\underline{1}},\quad B_{01}=\tfrac{1}{36}A_{0;\underline{2}1\underline{2}}-\tfrac{1}{9}A_{0;\underline{22}1},\quad B_{00}=\tfrac{7}{36}A_{0;\underline{22}12}-\tfrac{1}{9}A_{0;\underline{2}1\underline{2}2}.            
    \end{gathered}
  \]
  Consequently, the invariants $P_1$ and $Q_1$ for such path geometries are given by
  \begin{equation}
    \label{eq:TU-general-chain}
    P_1=\tfrac{7}{36}A_{0;\underline{22}11}-\tfrac 19A_{0;\underline{2}1\underline{2}1}+A_0,\qquad Q_1=\tfrac{7}{36}A_{2;\underline{11}22}-\tfrac 19A_{2;\underline{1}2\underline{1}2}+A_2
  \end{equation}        
\end{proof}

\begin{remark}\label{rmk:3d-path-geometries-chain-general}
  The 3D path geometry obtained in Theorem \ref{thm:2d-path-geometries-generalized-chains}  is an example of  variational orthopath structures defined in \cite{MS-cone}. This is due to the fact that in the path geometry $(Q,\scX,\scV)$  the conformal class of the bundle metric $[s^*\beta^1\circ s^*\beta^2]\subset \mathrm{Sym}^2(\scV^*)$ is well-defined for any section $s\colon Q\to\cG_{D_r}$  and the 2-form $\rho$ is a  \emph{compatible} quasi-symplectic 2-form, i.e. $\rho\w\rho\neq 0,\exd\rho=0,$ the paths are characteristic curves of $\rho,$ and the fibers of $Q\to N$ are isotropic. It is shown in \cite{MS-cone} that the paths of such structures are the extremal curves of a class of non-degenerate first order Lagrangians. Using Cartan-K\"ahler analysis, one can find  the local generality of real analytic 3D path geometries satisfying the conditions in Theorem \ref{thm:2d-path-geometries-generalized-chains}. It turns out that their local generality is  3 functions of 3 variables. 
\end{remark}
\begin{remark}\label{rmk:sign-of-W_2-chains}
  The sign of $W_2$ determines the orientation induced on the 3D para-CR geometry $(N,\scD_1,\scD_2)$ from the 3D path geometry $(Q,\scX,\scV)$ where $\scD_i=\tau_*\langle\tfrac{\partial}{\partial\alpha^i}\rangle$. More precisely, before normalizing $W_2$ to $\pm 1,$ one has $\exd\alpha^0\equiv W_2\alpha^1\w\alpha^2$ mod $\{\alpha^0\}.$ Thus, when $W_2>0$ it follows that $\alpha^1$ and $\alpha^2$ are $\scD_1$-positive and $\scD_2$-positive, respectively, and $\alpha^0$ is positive with respect to the induced co-orientation on the contact distribution $\scC=\scD_1\oplus\scD_2$, as we recalled  at the end of \ref{sec:cr-3-manifolds}. Similarly, it follows that  when $W_2<0$ then  $\exd\alpha^0$ in \eqref{eq:general-chain-2D-path} changes to $\exd\alpha^0\equiv-\alpha^1\w\alpha^2$ modulo $\{\alpha^0\}.$ Thus, the descent from the 3D path geometry to  the 3D para-CR geometry, as described above, induces a negative co-orientation. In   \cite[Corollary 4.4]{CZ-CR} the relation between the sign  of $W_2$ and the  orientation induced on the 3D para-CR geometry  is described by the fact that chain preserving contact diffeomorphisms induce an automorphism or an anti-automorphism of the underlying structure, which in our case is either a 3D para-CR or CR structure. 
\end{remark}

Note that by \eqref{eq:TU-general-chain},  3D path geometries in Theorem \ref{thm:2d-path-geometries-generalized-chains} do not satisfy the necessary condition \eqref{eq:TC-3D-to-2D} relating the torsion entries of the 3D path geometry to $P_1$ and $Q_1.$ It turns out that adding this necessary condition to conditions (1) and (2) in Theorem \ref{thm:2d-path-geometries-generalized-chains} is also sufficient for a 3D path geometry to arise as chains of a  para-CR geometry.
\begin{proof}[Proof of Theorem \ref{thm:3D-path-CR-para-CR}: the para-CR case]
As was discussed in \ref{sec:an-overview-chains}, conditions (1) and (2) are necessary conditions  for a 3D path geometry to arise as chains. Furthermore, if the initial path geometry corresponds to the chains of the resulting para-CR structure then by Proposition \ref{prop:chains-2D-path-necessary-cond} the necessary condition \eqref{eq:TC-3D-to-2D}  has to hold on $\cG_D:=\cG_{D_r}$  as well which implies that, restricted to $\cG_D,$  the torsion entries $A_0$ and $A_2$ need to be well-defined up to scale on $N,$ i.e.
  \[\exd A_0,\exd A_2\equiv 0\mod\{\alpha^0,\alpha^1,\alpha^2,\psi^0_0,\psi^1_1\}.\]
  Using \eqref{eq:quasi-symp-consequences-chains} and \eqref{eq:phi-2D-path-general-chain}, condition (3) implies 
  \[       \def\arraystretch{1.3}
\iota^*\psi=   \begin{pmatrix} 
      -\psi^0_0 & \mu_0 & -\half \beta^2 & \half \beta^1\\
      \alpha^0 & \psi^0_0 & \half\alpha^2 & -\half\alpha^1\\
      \alpha^1 & \beta^1 & \psi^1_1 &  0\\
      \alpha^2 & \beta^2 & 0 & -\psi^1_1
    \end{pmatrix},\qquad   \phi=    \begin{pmatrix} 
      -\psi^0_0-\tfrac 13\psi^1_1 & -\beta^2 & \mu_0\\
      \alpha^1 & \tfrac 23\psi^1_1 & \beta^1  \\
      \alpha^0 & \alpha^2 & \psi^0_0-\tfrac 13\psi^1_1   
    \end{pmatrix}.
\]
 where $(\cG_{D}\to N,\phi)$ is the Cartan geometry for the 3D para-CR geometry  induced by $\iota^*\psi$ and, by \eqref{eq:TU-general-chain}, it follows that the invariants $P_1$ and $Q_1$ are $\iota^*A_0$ and $\iota^*A_2,$ respectively.  We recall that, as was explained in \ref{sec:conventions}, in our notation condition (3) can be expressed as $A_{0;\underline{2}}=A_{2;\underline{1}}=0.$ This is due to the fact that conditions $A_{0;\underline{1}}=A_{2;\underline{2}}=0 $ already follow from conditions (1) and (2) and are satisfied for 3D path geometries in Theorem \ref{thm:2d-path-geometries-generalized-chains}.
 
Furthermore, the resulting Cartan connection $\phi$  uniquely determines $\iota^*\psi$ which coincides with what is obtained via the extension functor from chains of the 3D para-CR geometry $(\cG_{D}\to N,\phi)$ as discussed in \ref{sec:an-overview-chains}. Thus,  conditions (1),(2),(3) provide necessary and sufficient conditions for a 3D path geometry to arise as chains of a 3D para-CR geometry.
\end{proof}
\begin{remark}\label{rmk:chains-3d-path-geometries-exclusive}
  Given a pair of second order ODEs, checking conditions (1),(2) and (3) only involves finding  roots of a quartic, linear algebra  and differentiation and can be verified straightforwardly.    Note that the line  fields spanned by the vector fields $\tfrac{\partial}{\partial\alpha^1},\tfrac{\partial}{\partial\alpha^2}\tfrac{\partial}{\partial\beta^1},\tfrac{\partial}{\partial\beta^2}$ are well-defined on $\cG_{D_r}$ and, therefore, condition (3) is easy to verify. Equivalently, using Remark \ref{rmk:R-T-discriminant}, condition (3) can be given as expressing the absolute CR invariant $\bR$ in terms of  $\Delta_{\bT_\cP}$ and checking that it is invariant with respect to the action of   $B,$ i.e. the fibers of $\cG_D\to N.$ Also, in the proof above we changed the subscript $D_r,$ which  reflects the root type according to Remark \ref{rmk:D-r-curvature-type},  to $D$ so that it is consistent with the statement of Theorem \ref{thm:3D-path-CR-para-CR}.

  Lastly, we point out that the curvature of  path geometries in dimensions larger than 3 cannot be represented as a binary polynomial. Nevertheless,  one can find a replacement for condition (1) in Theorem \ref{thm:3D-path-CR-para-CR} for the curvature of path geometries defined by chains of higher dimensional (para-)CR structures. However, condition (3)  is never true for chains of   non-flat (para-)CR structures in higher dimensions. 
\end{remark}

\subsection{Characterization of  chains: CR 3-manifolds}   
\label{sec:3d-path-geometries}

In this section we use the same strategy as in the previous section to characterize the path geometry of CR chains in dimension three. We start by an analogue of Proposition \ref{prop:type-D-curv-chains} in the CR setting. 
  \begin{proposition}\label{prop:type-D-curv-CR-chains}
  Given a 3D path geometry $(Q,\scX,\scV)$ with associated Cartan geometry $(\cG\to Q,\psi),$ where $\psi$ is given as \eqref{eq:path-geom-cartan-conn-3D}, if the curvature $\bC$ has a  non-real complex root of multiplicity 2, then there is a principal $B$-subbundle $\iota\colon\cG_{D_c}\hookrightarrow\cG,$  where  $B\subset\mathrm{U}(1,1)$ is the Borel subgroup, over which 
  \[\iota^*W_1=\iota^*W_3=0,\quad \iota^* W_2=\tfrac 13\iota^* W_0=\tfrac 13\iota^*W_4=\pm 1,\]
  and the components of $\iota^*\psi$, as given in \eqref{eq:path-geom-cartan-conn-3D}, satisfy
  \begin{equation}
    \label{eq:reduc-relat-CR}
        \begin{aligned} 
      \iota^*\psi^2_1\equiv& -\iota^*\psi^1_2&&\mod \{\iota^*\alpha^0,\iota^*\alpha^1,\iota^*\alpha^2,\iota^*\beta^1,\iota^*\beta^2\},\\
      \iota^*\psi^1_1\equiv& 0&&\mod \{\iota^*\alpha^0,\iota^*\alpha^2,\iota^*\beta^2\},\\
      \iota^*\psi^2_2\equiv& 0 &&\mod \{\iota^*\alpha^0,\iota^*\alpha^1,\iota^*\beta^1\},\\
      \iota^*\nu_1,\iota^*\mu_1,\iota^*\nu_2,\iota^*\mu_2\equiv & 0&&\mod  \{\iota^*\alpha^0,\iota^*\alpha^1,\iota^*\alpha^2,\iota^*\beta^1,\iota^*\beta^2\}.
    \end{aligned}
    \end{equation}
  The 2-form
  \begin{equation}
    \label{eq:rho-GDr}
    \rho=\alpha^1\w\beta^1+\alpha^2\w\beta^2\in\Omega^2(\cG_{D_c})
  \end{equation}
  defines an invariant 2-form on $Q.$
\end{proposition}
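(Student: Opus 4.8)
The plan is to run the Cartan reduction from the proof of Proposition~\ref{prop:type-D-curv-chains} essentially verbatim, changing only the normalizations of the quartic $\bC$ so that they fit the complex root type. For the first reduction I would argue as follows. Since $\bC$ is real and has a non-real root of multiplicity $2$, its conjugate is also a root of multiplicity $2$, so $\bC$ is a nonzero real multiple of the square of a definite binary quadratic in $(\beta^1,\beta^2)$. As the $\mathrm{GL}(2,\RR)$-factor of the structure group $P_0$ in \eqref{eq:P-0-3Dpath} acts transitively on definite binary quadratics modulo scale, I would use the transformation \eqref{eq:action-W0-W4} of $W_0,W_4$ together with the analogous one for $W_2$ to pass to a sub-bundle $\iota_1\colon\cG^{(1)}\hookrightarrow\cG$ on which $W_1=W_3=0$ and $W_0=W_4$; the double-root condition then forces $W_2=\tfrac13 W_0$, i.e.\ $\bC=W_0\big((\beta^1)^2+(\beta^2)^2\big)^2$ up to sign. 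The residual structure group of $\cG^{(1)}\to Q$ is $\big(\RR^*\times\mathrm{CO}(2)\big)\ltimes P_+$ with $\mathrm{CO}(2)=\RR^*\cdot\mathrm{SO}(2)$ the rotations and scalings stabilizing the conformal class of $(\beta^1)^2+(\beta^2)^2$, the $\mathrm{SO}(2)$-part rotating $(\alpha^1,\alpha^2)$ and $(\beta^1,\beta^2)$ simultaneously. Differentiating $W_1=W_3=0$ and $W_0-W_4=0$ and substituting the Bianchi identities \eqref{eq:W-A-curvature-torsion-Bianchies}, I expect to obtain, modulo $\{\alpha^0,\alpha^1,\alpha^2,\beta^1,\beta^2\}$, the congruences $\psi^2_1\equiv-\psi^1_2$ and $\psi^1_1\equiv\psi^2_2$, and to record the semibasic defects in functions $A^a_{bi},B^a_{bc}$ on $\cG^{(1)}$ as in \eqref{eq:psi12-psi21-chains}.

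Next I would perform the remaining two reductions exactly as in the $D_r$ case. The surviving $\RR^*$-scaling (a combination of the $a_{00}$-action and the scaling in $\mathrm{CO}(2)$) moves $W_2$ with nonzero weight, by the analogue of \eqref{eq:W2-group-action}, so I normalize $W_2=\pm1$ (hence $W_0=W_4=\pm3$) on a codimension-one sub-bundle $\iota_2\colon\cG^{(2)}\hookrightarrow\cG^{(1)}$, a principal $\big(\RR^*\times\mathrm{SO}(2)\big)\ltimes P_+$-bundle over $Q$; differentiating this normalization gives $\psi^1_1+\psi^2_2\equiv0$ modulo semibasic forms, which together with the first-reduction congruence makes $\psi^1_1$ and $\psi^2_2$ each semibasic. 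Since the infinitesimal $P_+$-action on the functions $A^a_{bi},B^a_{bc}$ is affine with a unipotent shift by the $P_+$-parameters (the analogue of \eqref{eq:AB-inf-action-Bianchies-chains}), their common zero locus is a principal $B$-subbundle $\iota_3\colon\cG_{D_c}\hookrightarrow\cG^{(2)}$ over $Q$, with $B$ the $3$-dimensional Borel subgroup of $\mathrm{U}(1,1)$ (concretely $\mathrm{SO}(2)\times(\RR^*\ltimes\RR)$, realized inside $P$ through \eqref{eq:P-0-3Dpath}--\eqref{eq:P-p-3Dpath}). Pulling the Bianchi identities back to $\cG_{D_c}$ should then force $\nu_1,\nu_2\equiv0$ and hence $\mu_1,\mu_2\equiv0$ modulo $\{\alpha^0,\alpha^1,\alpha^2,\beta^1,\beta^2\}$, and refine the diagonal congruences to $\psi^1_1\equiv0$ mod $\{\alpha^0,\alpha^2,\beta^2\}$ and $\psi^2_2\equiv0$ mod $\{\alpha^0,\alpha^1,\beta^1\}$; this establishes \eqref{eq:reduc-relat-CR} with $\iota:=\iota_1\circ\iota_2\circ\iota_3$ and $\cG_{D_c}:=\cG^{(3)}$. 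As a consistency check the resulting normalization of $\bC$ matches $\bC_{\cP}$ in \eqref{eq:TC-CR-to-3D}. Finally, a direct computation with the structure equations on $\cG_{D_c}$ should show that $\rho=\alpha^1\wedge\beta^1+\alpha^2\wedge\beta^2$ is semibasic and $B$-invariant: its $\mathrm{SO}(2)$-invariance is immediate, since a simultaneous rotation of $(\alpha^1,\alpha^2)$ and $(\beta^1,\beta^2)$ fixes $\alpha^1\wedge\beta^1+\alpha^2\wedge\beta^2$ (it would not fix $\alpha^1\wedge\beta^2+\alpha^2\wedge\beta^1$), while invariance under the $\RR^*\ltimes\RR$-part follows as in Proposition~\ref{prop:type-D-curv-chains}; hence $\rho$ descends to an invariant $2$-form on $Q$.

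The step I expect to require the most care is the bookkeeping around the $\mathrm{SO}(2)$-factor that survives in the residual structure group, which has no counterpart in the $D_r$ case (where one obtains a full torus). This rotation couples $(\alpha^1,\alpha^2)$ to $(\beta^1,\beta^2)$, so one must track exactly which scalar combinations remain invariant: it is precisely this coupling that makes $\alpha^1\wedge\beta^1+\alpha^2\wedge\beta^2$, rather than $\alpha^1\wedge\beta^2+\alpha^2\wedge\beta^1$, the invariant $2$-form, that produces $\psi^2_1\equiv-\psi^1_2$ in place of the separate vanishings $\psi^1_2\equiv\psi^2_1\equiv0$ of Proposition~\ref{prop:type-D-curv-chains}, and that identifies the residual group as the Borel of $\mathrm{U}(1,1)$. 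Apart from this, the computation is the same Cartan reduction, so I anticipate no essentially new difficulty.
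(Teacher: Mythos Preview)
Your proposal is correct and follows essentially the same Cartan reduction as the paper's proof. The only point to sharpen is the third reduction: the paper records the semibasic defects of the \emph{diagonal} entries $\psi^1_1,\psi^2_2$ (not the off-diagonals referenced by \eqref{eq:psi12-psi21-chains}) and normalizes specifically $A^1_{11},B^1_{11},A^2_{22},B^2_{22}$ to zero, which is exactly what yields the refined congruences $\psi^1_1\equiv 0$ mod $\{\alpha^0,\alpha^2,\beta^2\}$ and $\psi^2_2\equiv 0$ mod $\{\alpha^0,\alpha^1,\beta^1\}$; your phrase ``their common zero locus'' should be read as the zero locus of just these four functions, since $P_+$ has only four parameters available to absorb.
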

\begin{proof}
  Since the proof is similar to that of Proposition \ref{prop:type-D-curv-chains}, we only highlight the differences. 
Since the binary quartic $\bC$ \eqref{eq:quadric-quartic} has a non-real complex root of multiplicity two,  it can be put in the form
  \[\bC=(A(\beta^1)^2+2B\beta^1\beta^2+C(\beta^2)^2)^2\otimes V\otimes X^{-1},\]
  where $B^2-AC<0$ for some functions $A,B,C$ on $\cG.$ Relating $A,B,C$ to $W_i$'s and using the parametrization of $P_0$  in \eqref{eq:P-0-3Dpath}, one can obtain the induced group action on $A-C$ and $B$  to be
  \begin{equation*}
    \label{eq:ABC-action}
    \begin{aligned}
     A(u)-C(u)&\to  A(r_gu)-C(r_gu)=A a_{11}^2-A a_{12}^2+2 B a_{11} a_{21}-2 B a_{12} a_{22}+C a_{21}^2-C a_{22}^2,\\
     B(u)&\to  B(r_gu)=Aa_{11} a_{12}+B a_{11} a_{22}+B a_{12} a_{21}+C a_{21} a_{22}.
    \end{aligned}
  \end{equation*}
Using the above relations, it is straightforward to find a set of group parameters at which $A-C$ and $B$ vanish. For instance, since with respect to the initial coframe the condition $AC> 0$ has to hold in order for  $B^2-AC<0$ to be satisfied, for the real-valued parameters $a_{21}=a_{22}=1,a_{11}=-\frac{B+\sqrt{AC-B^2}}{A}$ and $a_{12}=-\frac{B-\sqrt{AC-B^2}}{A},$ one obtains that  $A-C$ and $B$ vanish. Note that for such  values $\det(a_{ij})\neq 0$ remains valid. Thus, one can always find a choice of coframe with respect to which $A(r_gu)=C(r_gu)$ and $B(r_gu)=0,$ which gives $W_1(r_gu)=W_3(r_gu)=0$ and $W_0(r_gu)=W_4(r_gu)=3W_2(r_gu).$  As a result, if a 3D path geometry has a non-real complex root of multiplicity two, then one can define a  sub-bundle $\iota_1\colon\cG^{(1)}\hookrightarrow\cG$ characterized by
\[\cG^{(1)}=\left\{u\in\cG\,\vline\, W_1(u)=W_3(u)=0, W_0(u)=W_4(u)=3W_2(u)\right\}.\]
By our discussion above, $\cG^{(1)}\to Q$ is a principal $P^{(1)}$-bundle where $P^{(1)}=(\RR^*\times\mathrm{CO}(2))\ltimes P_+$.  More explicitly, using the group action relations above, the stabilizer of $B=0,A-C=0$ is given by
\[A(a_{11}a_{12}+a_{21}a_{22})=0,\quad A(a_{11}^2-a_{12}^2+a_{21}^2-a_{22}^2)=0,\]
which as  a subgroup of the structure group is $\RR^*\times\mathrm{CO}(2)\subset P_0.$ Thus, we can write $a_{11}=a_{22}=a\cos(b)$ and $a_{12}=-a_{21}=a\sin(b).$ Similarly to Proposition \ref{prop:type-D-curv-chains}, the action of $P^{(1)}$ on $W_2$ is given by
\[W_2(u)\to W_2(r_gu)=a^4 W_2(u).\]
Hence we can define a sub-bundle $\iota_2\colon\cG^{(2)}\hookrightarrow \cG^{(1)}$ defined as $\cG^{(2)}:=\{u\in\cG^{(1)}\,\vline\, W_2(u)=\pm 1\},$ as we did in \eqref{eq:P2-2nd-reduction}, which in this case is a principal $P^{(2)}$-bundle where  $P^{(2)}=(\R^*\times SO(2))\ltimes P_+.$ We assume $W_2>0,$ and refer to Remark \ref{rmk:sign-of-W_2-chains} when $W_2<0.$ Pulling-back the Bianchi identities \eqref{eq:W-A-curvature-torsion-Bianchies} to $\cG^{(2)}$, it follows that the reduced entries of $\psi$ in \eqref{eq:path-geom-cartan-conn-3D} are 
\[\iota_{12}^*\psi^1_2+\iota_{12}^*\psi^2_1,\ \iota_{12}^*\psi^1_1,\ \iota_{12}^*\psi^2_2\equiv 0\mod \iota^*_{12}\{\alpha^0,\alpha^1,\alpha^2,\beta^1,\beta^2\},\]
where $\iota_{12}=\iota_1\circ\iota_2\colon\cG^{(2)}\hookrightarrow \cG.$ For the third reduction, we proceed similarly to Proposition \ref{prop:type-D-curv-chains}, by considering the induced action of $P^{(2)}$ on $A^1_{11},B^{1}_{11},A^2_{22}$ and $B^2_{22}$ where $\iota_{12}^*\psi^j_k=A^j_{ki}\alpha^i+B^j_{ka}\beta^a.$ It is straightforward to follow the analogous step in the proof of Proposition \ref{prop:type-D-curv-chains} and show that the sub-bundle $\iota_3\colon\cG^{(3)}\to\cG^{(2)}$ defined as
\[\cG^{(3)}=\left\{u\in\cG^{(2)}\,\vline\, A^1_{11}(u)=A^2_{22}(u)=B^1_{11}(u)=B^2_{22}(u)=0\right\},\]
is a principal $B$-bundle where $B\subset \mathrm{U}(1,1)$ is the Borel subgroup. Consequently,  via pull-back by $\iota^*$ where $\iota:=\iota_1\circ\iota_2\circ\iota_3\colon\cG^{(3)}\to\cG,$ the relations  \eqref{eq:reduc-relat-CR} can be similarly shown to hold on $\cG_{D_c}:=\cG^{(3)}.$  Checking that the 2-form $\rho$ is invariant under the action of $B$ is also straightforward and is skipped. 
\end{proof}
Similar to the notation $\cG_{D_r}$ explained in Remark \ref{rmk:D-r-curvature-type}, the subscript $D_c$ for the principal bundle $\cG_{D_c}$ denotes the assumption that the quartic $\bC$ has a repeated non-real complex root of multiplicity two.

One can prove a statement similar to Theorem \ref{thm:2d-path-geometries-generalized-chains} by modifying conditions (1) and (2) to the setting of Proposition \ref{prop:type-D-curv-CR-chains}, e.g. a non-real complex root of multiplicity two . We leave that to the interested reader and  directly prove Theorem \ref{thm:3D-path-CR-para-CR} when $\bC$ has a non-real complex root of multiplicity two.
\begin{proof}[Proof of Theorem \ref{thm:3D-path-CR-para-CR}: the CR case] 
  Using Proposition \ref{prop:type-D-curv-CR-chains}, the  proof when $\bC$ has a non-real complex root of multiplicity two is almost identical to the one given in \ref{sec:path-geom-aris-1} wherein $\bC$ has two real roots.   Following the same steps as in the para-CR case, one needs to find the vanishing quantities in the $\{e\}$-structure  on $\cG_D:=\cG_{D_c}$ in Proposition \ref{prop:type-D-curv-CR-chains} that result from $\exd\rho=0$  together with  their differential consequences. Consequently, using the inclusion $\iota\colon\cG_D\hookrightarrow\cG,$ it follows that the 3D leaf space of $\{\alpha^0,\alpha^1,\alpha^2\},$ i.e. $N=Q\slash\scV,$ is equipped with a CR structure $(\cG_D\to N,\phi).$ However, as in Theorem \ref{thm:2d-path-geometries-generalized-chains}, in general the torsion of the path geometry $\bT_{\cG_D}$ and the fundamental invariant of the resulting CR structure  $R=R_1+\ri R_2$ are not  related as in Proposition \eqref{eq:TC-CR-to-3D}, or, equivalently, as in Remark \ref{rmk:R-T-discriminant}. As in the para-CR case,  this necessary condition between $\bT_{\cG_D}$ and $R$ is satisfied if and only if the coefficients of $\bT_{\cG_D}$ have no dependency on the fibers of $\cG_D\to N.$ 
  Conversely, using Proposition \ref{prop:chains-3d-cr-necessary-cond}, one knows that chains of any 3D CR geometry defines such path geometries. 
\end{proof}
\begin{remark}
  In \cite{Graham-CR,CG-CR} characterizations for   conformal structures arising from Fefferman's construction \cite{Fefferman-CR} for CR structures were given. The null geodesics of such conformal structures project to chains of the underlying CR structures. As mentioned in Remark \ref{rmk:chains-3d-path-geometries-exclusive}, unlike these characterizations, our characterization of the path geometry of chains are computationally verifiable and only holds in dimension three.
\end{remark}

\subsection*{Acknowledgments}

The authors  thank Maciej Dunajski for inspiring discussions and the anonymous referee for helpful comments. The authors also thank Dennis The for his comments which helped with deriving \eqref{eq:complex-point-trans}. WK was partially supported by the grant 2019/34/E/ST1/00188 from the National Science Centre, Poland. OM received funding from the Norwegian Financial Mechanism 2014-2021 with project registration number 2019/34/H/ST1/00636. OM  acknowledges partial support by the grant  PID2020-116126GB-I00 provided via the Spanish Ministerio de Ciencia e Innovaci\'on MCIN/ AEI /10.13039/50110001103. The  EDS calculations  are done using Jeanne Clelland's \texttt{Cartan} package in Maple.

%-------------BIBLIOGRAPHY--------
\bibliographystyle{alpha}      
\bibliography{dancing}

\begin{thebibliography}{DMT20}

\bibitem[BMN22]{PKE}
G.~Bor, O.~Makhmali, and P.~Nurowski.
\newblock Para-{K}{\"a}hler-{E}instein 4-manifolds and non-integrable twistor
  distributions.
\newblock {\em Geometriae Dedicata}, 216(1):9, 2022.

\bibitem[Bry97]{Bryant-ProjFlat}
R.~L. Bryant.
\newblock Projectively flat {F}insler {$2$}-spheres of constant curvature.
\newblock {\em Selecta Math. (N.S.)}, 3(2):161--203, 1997.

\bibitem[Bry04]{Bryant-CR}
R.~L. Bryant.
\newblock Real hypersurfaces in unimodular complex surfaces.
\newblock {\em arXiv preprint math/0407472}, 2004.

\bibitem[BW22]{BW-chains}
G.~Bor and T.~Willse.
\newblock Chains of path geometries on surfaces: theory and examples.
\newblock {\em arXiv preprint arXiv:2201.09141, To appear in Israel J. Math},
  2022.

\bibitem[Car24]{Cartan-Proj}
E.~Cartan.
\newblock Sur les vari\'{e}t\'{e}s \`a connexion projective.
\newblock {\em Bull. Soc. Math. France}, 52:205--241, 1924.

\bibitem[Car32]{Cartan-CR2}
E.~Cartan.
\newblock Sur la g{\'e}om{\'e}trie pseudo-conforme des hypersurfaces de
  l'espace de deux variables complexes {II}.
\newblock {\em Annali della Scuola Normale Superiore di Pisa-Classe di
  Scienze}, 1(4):333--354, 1932.

\bibitem[Car33]{Cartan-CR1}
E.~Cartan.
\newblock Sur la g{\'e}om{\'e}trie pseudo-conforme des hypersurfaces de
  l'espace de deux variables complexes.
\newblock {\em Annali di Matematica Pura ed Applicata}, 11:17--90, 1933.

\bibitem[{\v{C}}G10]{CG-CR}
A.~{\v{C}}ap and A.~R. Gover.
\newblock A holonomy characterisation of {F}efferman spaces.
\newblock {\em Ann. Global Anal. Geom.}, 38(4):399--412, 2010.

\bibitem[CM74]{CM-CR}
S.-S. Chern and J.~K. Moser.
\newblock Real hypersurfaces in complex manifolds.
\newblock {\em Acta Math.}, 133:219--271, 1974.

\bibitem[{\v{C}}S09]{CS-Parabolic}
A.~{\v{C}}ap and J.~Slov{\'a}k.
\newblock {\em Parabolic geometries. {I}}, volume 154 of {\em Mathematical
  Surveys and Monographs}.
\newblock American Mathematical Society, Providence, RI, 2009.
\newblock Background and general theory.

\bibitem[{\v{C}}{\v{Z}}09]{CZ-CR}
A.~{\v{C}}ap and V.~{\v{Z}}{\'a}dn{i}k.
\newblock On the geometry of chains.
\newblock {\em Journal of Differential Geometry}, 82(1):1--33, 2009.

\bibitem[DFK15]{DFK}
M.~Dunajski, E.~V. Ferapontov, and B.~Kruglikov.
\newblock On the {E}instein-{W}eyl and conformal self-duality equations.
\newblock {\em Journal of Mathematical Physics}, 56(8):083501, 2015.

\bibitem[DMT20]{DMT}
B.~Doubrov, A.~Medvedev, and D.~The.
\newblock Homogeneous integrable {L}egendrian contact structures in dimension
  five.
\newblock {\em The Journal of Geometric Analysis}, 30:3806--3858, 2020.

\bibitem[DW20]{DW}
M.~Dunajski and A.~Waterhouse.
\newblock Einstein metrics, projective structures and the ${SU}(\infty)$ {T}oda
  equation.
\newblock {\em Journal of Geometry and Physics}, 147:103523, 2020.

\bibitem[Fef76]{Fefferman-CR}
C.~L. Fefferman.
\newblock {M}onge-{A}mp\`ere equations, the {B}ergman kernel, and geometry of
  pseudoconvex domains.
\newblock {\em Annals of Mathematics}, 103(3):395--416, 1976.

\bibitem[Fel95]{Fels}
M.~E. Fels.
\newblock The equivalence problem for systems of second-order ordinary
  differential equations.
\newblock {\em Proc. London Math. Soc. (3)}, 71(1):221--240, 1995.

\bibitem[Gar89]{Gardner-Book}
R.~B. Gardner.
\newblock {\em The method of equivalence and its applications}, volume~58 of
  {\em CBMS-NSF Regional Conference Series in Applied Mathematics}.
\newblock Society for Industrial and Applied Mathematics (SIAM), Philadelphia,
  PA, 1989.

\bibitem[God08]{Godlinski}
M.~Godli\'nski.
\newblock Geometry of third-order ordinary differential equations and its
  applications in general relativity.
\newblock {\em PhD thesis, University of Warsaw, arXiv:0810.2234}, 2008.

\bibitem[Gra87]{Graham-CR}
C.~R. Graham.
\newblock On {S}parling's characterization of {F}efferman metrics.
\newblock {\em Amer. J. Math.}, 109(5):853--874, 1987.

\bibitem[Gro00]{Grossman-Thesis}
D.~A. Grossman.
\newblock {\em Path geometries and second-order ordinary differential
  equations}.
\newblock ProQuest LLC, Ann Arbor, MI, 2000.
\newblock Thesis (Ph.D.)--Princeton University.

\bibitem[Jac90]{Jacobowitz}
H.~Jacobowitz.
\newblock {\em An introduction to {CR} structures}.
\newblock Number~32. American Mathematical Soc., 1990.

\bibitem[KM21]{KM-Cayley}
W.~Kry{\'n}ski and O.~Makhmali.
\newblock The {C}ayley cubic and differential equations.
\newblock {\em The Journal of Geometric Analysis}, 31:6219--6273, 2021.

\bibitem[Kry22]{Wojtek}
W.~Kry{\'n}ski.
\newblock The {S}chwarzian derivative and {E}uler-{L}agrange equations.
\newblock {\em Journal of Geometry and Physics}, 182:104665, 2022.

\bibitem[KT23]{KT-ODE}
J.~A. Kessy and D.~The.
\newblock On uniqueness of submaximally symmetric vector ordinary differential
  equations of {C}-class.
\newblock {\em SIGMA Symmetry Integrability Geom. Methods Appl.}, 19:Paper No.
  058, 29, 2023.

\bibitem[MS23]{MS-cone}
O.~Makhmali and K.~Sagerschnig.
\newblock Parabolic quasi-contact cone structures with an infinitesimal
  symmetry.
\newblock {\em arXiv preprint arXiv:2302.02862}, 2023.

\bibitem[Tod00]{Tod}
K.~P. Tod.
\newblock Einstein-{W}eyl spaces and third-order differential equations.
\newblock {\em Journal of Mathematical Physics}, 41(8):5572--5581, 2000.

\end{thebibliography}
\end{document}